\documentclass[12pt]{amsart}

\usepackage{fullpage}
\usepackage{amsthm, amsmath, amssymb,amscd}
\usepackage[colorlinks=true, linkcolor=violet, citecolor=blue, urlcolor=brown]{hyperref}
\usepackage{enumitem, booktabs}

\theoremstyle{plain}
\newtheorem{theorem}{Theorem}[section]
\newtheorem{lemma}[theorem]{Lemma}

\newtheorem{corollary}[theorem]{Corollary}
\newtheorem{proposition}[theorem]{Proposition}
\theoremstyle{definition}

\newtheorem{example}[theorem]{Example}

\theoremstyle{remark}
\newtheorem{remark}{\textbf{Remark}}

\numberwithin{equation}{section}
\usepackage{todonotes}

\theoremstyle{definition}

\usepackage{tikz}
\tikzstyle{vertex}=[circle, draw, inner sep=0pt, minimum size=3pt]

\allowdisplaybreaks
\DeclareMathOperator{\ind}{ind}
\newcommand{\z}{\mathbb Z}
\newcommand{\q}{\mathbb Q}

\begin{document}

	\title[ Mnogeneity of reciprocal polynomials]
	{Necessary and sufficient condition for reciprocal polynomials to be monogenic}


	\author{Anuj Narode}
	\address{Department of Mathematics, Indian Institute of Technology Guwahati, Assam, India, PIN- 781039}
	\email{anujnarode@gmail.com \\
		\href{https://orcid.org/0009-0005-6643-3401}{ORCID: 0009-0005-6643-3401}}
	

	\date{\today}
	
	\thanks{}
	
	\subjclass[2010]{11R04; 11R16; 11R21}
	
	\keywords{Monogenity; power integral basis; discriminant; reciprocal polynomials}
	
	\dedicatory{}
	\begin{abstract}
    Let $\mathbb{Z}_K$ denote the ring of integers of the number field $K=\mathbb{Q}(\theta)$, where $\theta$ is a root of a monic irreducible polynomial $f(x)\in\mathbb{Z}[x]$. We say that $f(x)$ is monogenic if $\mathbb{Z}_K=\mathbb{Z}[\theta]$. A polynomial $f(x)\in\mathbb{Z}[x]$ is called reciprocal if $f(x)=x^{\deg(f)}f(1/x)$.
    In this article, we establish necessary and sufficient conditions for the monogeneity of  reciprocal polynomials. As an application, we obtain an alternative  and much simpler proof that the maximal real subfields of cyclotomic fields are monogenic.
\end{abstract}
	\maketitle

	

\section{introduction and Statement of results}
    A number field $K$ is said to be \textit{monogenic} if its ring of integers $\mathbb{Z}_K$ is generated as a $\mathbb{Z}$-algebra by a single element in $\mathbb{Z}_K$, i.e.,$$\mathbb{Z}_K = \mathbb{Z}[\alpha], \text{ for some } \alpha \in \mathbb{Z}_K.$$
    
    Thus, $K = \mathbb{Q}(\alpha)$ admits a \textit{power integral basis} $\{ 1, \alpha, \ldots ,\alpha^{n-1} \}$, where $n = [K : \mathbb{Q}]$.
    Determining whether a number field is monogenic is a longstanding
    and challenging problem in algebraic number theory.
    
    Let $f(x) \in \mathbb{Z}[x]$ be a monic irreducible polynomial of degree $n$ having a root $\theta$, and set $K = \mathbb{Q}(\theta)$.
    The \textit{index }of a polynomial $f(x),$ denoted by  $\operatorname{ind}(f)$, is defined as  the index of $\mathbb{Z}[\theta]$ in $\mathbb{Z}_K$, i.e., $\operatorname{ind}(f)=[ \mathbb{Z}_K : \mathbb{Z}[\theta] ]$.
    Since $\mathbb{Z}_K$ is a free $\mathbb{Z}$-module of rank $\deg(f)$, and $\mathbb{Z}[\theta]$ is a submodule of $\mathbb{Z}_K$ of the same rank, this index is finite. The polynomial $f(x)$ is called monogenic if its index is one, or equivalently  $[ \mathbb{Z}_K : \mathbb{Z}[\theta] ] = 1$.  The monogenity of the polynomial $f(x)$ implies the monogenity of the number field $K$; however, the converse is not true.
    The discriminant of the polynomial  $\Delta(f(x))$, the discriminant of the number field $\Delta(K)$, and the index of the polynomial are linked by the fundamental relation:
     \begin{equation}
	     \Delta(f(x)) = \operatorname{ind}(f)^2 \cdot \Delta(K). \label{p5-Eq-1.1}
	   \end{equation}
    Clearly,  if $\Delta(f(x))$ is squarefree then $f(x)$ is monogenic. In this direction, for any integer $n \ge 2$, Kedlaya in \cite{p2-Kedlaya} constructed an infinite class of monic irreducible polynomials of degree $n$ with integer coefficients having squarefree discriminants. Such polynomials are necessarily monogenic. Further, by extending Kedlaya's approach, for any odd prime $p$, Jones in \cite{p2-Jones_2019} constructed a class of monogenic polynomials of degree $p$ with non-squarefree discriminants. 
       
       The monogenity of a number field significantly simplifies arithmetic computations, such as the calculation of discriminants, integral bases, and ideal arithmetic.
       The problem of determining whether a number field is monogenic is classical and dates back to Dedekind and Hasse. Dedekind introduced the first example of a non-monogenic number field, showing that the cubic field $K =\mathbb{Q}(\alpha)$, where $\alpha$ is a root of $x^3 -x^2 -2x - 8$, is not monogenic (cf. \cite{p5-Dedekind}).
       In 1878, Dedekind introduced a widely used theorem known as \textit{Dedekind's criterion}, which gives a necessary and sufficient condition for a prime $p$ not to divide the index $[\mathbb{Z}_K : \mathbb{Z}[\theta]]$ in terms of the factorization of a polynomial $f(x)$ modulo $p$ (cf. \cite[Theorem 6.1.4]{p5-cohen}).
       This criterion has garnered significant interest, leading to several equivalent versions and generalizations (cf. \cite{p5-Ershov, p5-Khanduja-2016}).
       For further developments in this direction, we refer the reader to the survey article by Ga\'al \cite{p5-gaal-2}.

       In this article, we study monogeneity of reciprocal polynomials. Recall, a polynomial $f(x) \in \mathbb{Z}[x]$ is called reciprocal if $f(x) = x^{\operatorname{deg}(f)} f(1/x)$. For example, cyclotomic polynomials are reciprocal polynomials. Let $n \geq 3$ be an odd integer. Let $f(x)$ be a reciprocal polynomial of degree $n$. Then, $-1$ is a root of $f(x)$. Hence, $f(x) = (x+1) h(x)$, where $h(x)$ is a reciprocal polynomial of degree $n-1$. Therefore, to study reciprocal polynomials it is sufficient to consider even degree reciprocal polynomials. In \cite{p3-Alexandersson}, Alexandersson \textit{et al.} studied the connection between even degree reciprocal polynomials and the Chebyshev polynomials of the first kind. The Chebyshev polynomials of the first kind are defined recursively as follows: 
	$$T_0(x) = 1, ~~ T_1(x) = x ~~ \text{and} ~~ T_n(x) = 2xT_{n-1}(x) - T_{n-2}(x) ~~ \text{for} ~~ n \geq 2.$$ 
We recall the following result of Alexandersson \textit{et al.} \cite{p3-Alexandersson}.
		\begin{proposition} \cite[Proposition 3.2]{p3-Alexandersson}\label{p3-lemma_1}
		Let $n \geq 2$ be an integer and let 
		$$ f(x) = \sum_{j = 0}^{2n} a_j x^j \in \mathbb{Z}[x],$$ 
		where $a_j = a_{2n-j}$ for  all $j$, so that $f(x)$ is reciprocal. Define an nth degree polynomial $g(u) \in \mathbb{Z}[u]$, by 
		\begin{equation} \label{p3-eq-2.2}
			g(u) := a_n + 2 \sum_{j =1}^{n} a_{n-j} T_j(u/2).
		\end{equation}
		Then, $f(x) = x^ng(x+ x^{-1})$.
	\end{proposition}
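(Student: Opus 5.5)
The plan is to use the classical identity for Chebyshev polynomials: for every $j\ge 1$,
\[
x^j + x^{-j} = 2T_j\!\left(\tfrac{x+x^{-1}}{2}\right),
\]
valid as an identity of Laurent polynomials in $x$. I would prove this first, by induction on $j$, with the recursion $T_j(v)=2vT_{j-1}(v)-T_{j-2}(v)$ evaluated at $v=(x+x^{-1})/2$. Set $S_j := x^j+x^{-j}$ and $S_0:=2=2T_0$. The base cases are $S_0 = 2$ and $S_1 = x+x^{-1} = 2T_1(v)$. For the inductive step, the identity $(x+x^{-1})S_{j-1} = S_j + S_{j-2}$ gives $S_j = 2v\,S_{j-1} - S_{j-2}$. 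This is exactly the recursion satisfied by $2T_j(v)$, and the initial values agree, so $S_j = 2T_j(v)$ for all $j$.

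Next I would regroup $f$ using the symmetry $a_j=a_{2n-j}$. Dividing by $x^n$,
\[
x^{-n}f(x)=\sum_{j=0}^{2n}a_jx^{j-n}=a_n+\sum_{j=1}^{n}\bigl(a_{n-j}x^{-j}+a_{n+j}x^{j}\bigr)=a_n+\sum_{j=1}^n a_{n-j}\,(x^j+x^{-j}).
\]
Substituting the identity above with $u=x+x^{-1}$ gives
\[
x^{-n}f(x)=a_n+2\sum_{j=1}^{n}a_{n-j}T_j(u/2)=g(x+x^{-1}),
\]
and multiplying by $x^n$ yields $f(x)=x^n g(x+x^{-1})$.

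The remaining claims are that $g\in\mathbb{Z}[u]$ and that $\deg g=n$. For integrality, I would show that $2T_j(u/2)$ has integer coefficients. Put $P_j(u):=2T_j(u/2)$. Then $P_0=2$, $P_1=u$, and $P_j=uP_{j-1}-P_{j-2}$, so by induction each $P_j$ is a monic integer polynomial of degree $j$ for $j\ge1$. Consequently $g = a_n + \sum_{j=1}^n a_{n-j}P_j(u)$ lies in $\mathbb{Z}[u]$, and its leading term is $a_0u^n$. Hence $g$ has degree $n$ exactly when $a_0\neq 0$, which holds in the intended setting: there $f$ has degree $2n$, so $a_{2n}=a_0\ne0$.

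The proof is routine and I do not expect a serious obstacle. The one point needing care is the integrality: it comes from the factor $2$ combined with the half-argument $u/2$, and it is cleanest to handle through the rescaled recursion for $P_j$ rather than by working with $T_j$ directly.
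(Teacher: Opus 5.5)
Your proof is correct and complete. It proves the Chebyshev identity $x^j+x^{-j}=2T_j\bigl((x+x^{-1})/2\bigr)$ by induction, pairs $a_{n-j}x^{-j}$ with $a_{n+j}x^{j}$ using $a_{n+j}=a_{n-j}$, and gets integrality from the rescaled recursion $P_j=uP_{j-1}-P_{j-2}$; this is the standard argument, and the paper gives no proof of its own (it only quotes Proposition 3.2 of Alexandersson et al.). Your remark that the leading coefficient of $g$ is $a_0$, so that $g$ is monic of degree $n$ when $f$ is monic of degree $2n$, is exactly what the paper later relies on without comment when it treats $g$ as the minimal polynomial of $\alpha=\theta+\theta^{-1}$.
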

	We shall henceforth use the following notation. Let $f(x)$ be a reciprocal polynomial of degree $2n$, and let $g(x)$ be the degree $n$ polynomial obtained in Proposition~\ref{p3-lemma_1} such that 
	$$
	f(x) = x^{n} g(x + x^{-1}).
	$$
    In \cite{p3-jones_2021-2}, Jones constructed infinite families of monic sextic reciprocal monogenic polynomials $f(x)$ with $\operatorname{Gal}(f)\cong D_n$, where $\operatorname{Gal}(f)$ denotes the Galois group of $f(x)$ over $\mathbb{Q}$ and $D_n$ denotes the dihedral group of order $2n$, with $n\in\{3,6\}$. Subsequently, in \cite{p3-jones_2021-1}, he extended some of these results to reciprocal polynomials of larger degree. In particular, using cyclotomic polynomials, he constructed a class of monogenic reciprocal polynomials of degree $2^aq^b$, where $a$ and $b$ are positive integers and $q\in\{3,5,7\}$.

Building on this work, Barman, Narode, and Wagh obtained more general sufficient conditions for the monogeneity of reciprocal polynomials. A key ingredient in their proof is the relation
\begin{equation*}
\Delta(f)=(-1)^{n(2n-1)}f(1)f(-1)\Delta(g)^2.
\end{equation*}
They also used this relation to prove a conjecture of Jones concerning the discriminant of reciprocal polynomials (see \cite[Conjecture~3.3]{p3-jones_2021-1}). In particular, they proved the following theorem.

\begin{theorem} \cite[Theorem~1.5]{BNW} \label{p3-th-2.14}
Let $n\geq2$ be an integer, and let
$$
f(x)=\sum_{j=0}^{2n}a_jx^j\in\mathbb{Z}[x]
$$
be a monic irreducible polynomial. Suppose that $a_j=a_{2n-j}$ for all $j$, so that $f(x)$ is reciprocal. Define the polynomial $g(x)\in\mathbb{Z}[x]$ of degree $n$ by
$$
g(x):=a_n+2\sum_{j=1}^{n}a_{n-j}T_j\left(\frac{x}{2}\right).
$$
Then $f(x)$ is monogenic provided that:
\begin{enumerate}
\item[(1)] $f(-1)f(1)$ is squarefree;
\item[(2)] $g(x)$ is monogenic.
\end{enumerate}
\end{theorem}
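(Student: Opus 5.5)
The plan is to compare discriminants through the relation $\Delta(f)=(-1)^{n(2n-1)}f(1)f(-1)\Delta(g)^2$ and then use the tower $\mathbb{Q}\subset L=\mathbb{Q}(\eta)\subset K=\mathbb{Q}(\theta)$. Here $\theta$ is a root of $f(x)$, $\eta=\theta+\theta^{-1}$, and $g(\eta)=\theta^{-n}f(\theta)=0$.

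First, irreducibility of $g$ follows from that of $f$. If $g=g_1g_2$ nontrivially, then $f(x)=x^{\deg g_1}g_1(x+x^{-1})\cdot x^{\deg g_2}g_2(x+x^{-1})$ would be a nontrivial factorization in $\mathbb{Z}[x]$, since both factors are monic integer polynomials. Thus $[L:\mathbb{Q}]=n$ and $[K:L]=2$, because $\theta$ is a root of $x^2-\eta x+1\in \mathbb{Z}_L[x]$. Hypothesis (2) gives $\mathbb{Z}_L=\mathbb{Z}[\eta]$, so $\Delta(L)=\Delta(g)$.

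Next comes the tower formula for discriminants,
\[
\Delta(K)=\Delta(L)^{2}\, N_{L/\mathbb{Q}}(\mathfrak d_{K/L}).
\]
The relative discriminant $\mathfrak d_{K/L}$ divides the discriminant of $x^2-\eta x+1$, namely $\eta^2-4$. Using $g(\pm 2)=f(\pm1)$ (up to the sign $(\pm1)^n$), we compute
\[
N_{L/\mathbb{Q}}(\eta^2-4)=N(\eta-2)N(\eta+2)=\pm g(2)g(-2)=\pm f(1)f(-1).
\]
Combining this with the displayed discriminant relation and \eqref{p5-Eq-1.1} gives
\[
\operatorname{ind}(f)^2\Delta(K)=\pm f(1)f(-1)\Delta(L)^2 .
\]
It follows that $\operatorname{ind}(f)^2 N_{L/\mathbb{Q}}(\mathfrak d_{K/L})=|f(1)f(-1)|$.

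By hypothesis (1), $f(1)f(-1)$ is squarefree, and $\operatorname{ind}(f)^2$ divides it. Hence $\operatorname{ind}(f)=1$, i.e. $f(x)$ is monogenic.

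The main obstacle is the bookkeeping in the tower formula. Specifically, we need $\mathfrak d_{K/L}\mid (\eta^2-4)$, which holds because $\mathbb{Z}_L[\theta]\subseteq\mathbb{Z}_K$ is an $\mathbb{Z}_L$-order with relative discriminant $(\eta^2-4)$. We also need the signs and the value $N(\eta\mp2)=(\mp1)^n g(\pm2)$ to come out right; these follow directly from $g$ being monic with roots the conjugates of $\eta$.

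A more elementary alternative avoids relative discriminants altogether. From \eqref{p5-Eq-1.1}, a prime $p$ dividing $\operatorname{ind}(f)$ must satisfy $p^2\mid \Delta(f)$. One could then apply Dedekind's criterion at primes dividing $\Delta(g)$ but not $f(1)f(-1)$. However, the tower argument is cleaner, so that is the one I would use.
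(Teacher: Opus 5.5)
Your proof is correct and is essentially the argument of \cite{BNW} that the paper cites. There, combining $\Delta(f)=\pm f(1)f(-1)\Delta(g)^2$ with $\Delta(f)=\operatorname{ind}(f)^2\Delta(K)$, $\Delta(g)=\operatorname{ind}(g)^2\Delta(L)$ and $\Delta(L)^2\mid\Delta(K)$ gives $\operatorname{ind}(f)^2\mid f(1)f(-1)\operatorname{ind}(g)^4$ (Lemma~\ref{p6-lemma1.3}), and your identity $\operatorname{ind}(f)^2N_{L/\mathbb{Q}}(\mathfrak d_{K/L})=|f(1)f(-1)|$ is exactly this with $\operatorname{ind}(g)=1$. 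The divisibility $\mathfrak d_{K/L}\mid(\eta^2-4)$ and the norm computation are not needed, and they contain a harmless sign slip: $N_{L/\mathbb{Q}}(\eta+2)=(-1)^ng(-2)$, not $g(-2)$.
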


In the same article, the authors posed the question of whether the converse of Theorem~\ref{p3-th-2.14} holds. We show that, in general, the converse is false. However, if condition~(1) is replaced by the stronger requirement that $f(1)$ and $f(-1)$ are individually squarefree, then the converse does hold. The following example serves as a counter example to Theorem~\ref{p3-th-2.14}.
    
        \begin{example}
            Consider $$f(x)=x^4+3x^2+1,\qquad g(t)=t^2+1.$$
        Then $f(x)=x^2g(x+x^{-1})$ and $f(1)=f(-1)=5$, but $f(x)$ is monogenic.
        
        We first show that $f(x)$ is irreducible. Note that $f(x)$ does not have a rational root. A factorization into monic integral quadratics would have the form
        $$(x^2+ax+b)(x^2-ax+b),\qquad b\in\{1,-1\},$$
        forcing $3=2b-a^2$, which is impossible. Thus $f(x)$ is irreducible.
        Since $\Delta(f)=400$, only $2$ and $5$ can divide its index. Dedekind's criterion (or see Theorem~\ref{p5-Thm-2.1}) gives the following calculations, using the indicated monic lifts.
        \begin{table}[h!]
        \caption{Factorization of $f(x)$ modulo $p$ and the corresponding values of $M(x)$}
        \label{...}
        \centering
        \begin{minipage}{0.7\textwidth}
        \centering
        \renewcommand{\arraystretch}{1.35}
        \begin{tabular}{ccc}
        \hline
        $p$ & Factorization modulo $p$ & $M(x)$\\
        \hline
        $2$ & $(x^2+x+1)^2$ & $-x^3-x$\\
        $5$ & $(x-1)^2(x+1)^2$ & $x^2$\\
        \hline
        \end{tabular}
        \end{minipage}
        \end{table}

        Here $M(x)$ is $1/p$ times the difference between $f(x)$ and the product of the lifted factors with their indicated exponents. In both cases, $\overline M(x)$ is coprime to every repeated irreducible factor. Consequently,
        $$\ind(f)=1,\qquad f(1)f(-1)=25.$$
        This disproves the converse.
        \end{example}
         The obstruction to monogeneity is natural, as $p^2\mid f(1)f(-1)$ may occur because $p\mid f(1)$ and $p\mid f(-1)$, although $p^2\nmid f(1)$ and $p^2\nmid f(-1)$.
         This suggests considering a converse under the modified hypotheses that $g(x)$ is monogenic and $f(1)$ and $f(-1)$ are individually squarefree. Now we state the corrected converse.
\begin{theorem} \label{p6-thm-1.1}
		Let $n \geq 2$ be an integer, and let 
		$$f(x) = \sum_{j=0}^{2n} a_j x^j \in \mathbb{Z}[x]$$ 
		be a monic and irreducible polynomial. Let $a_j = a_{2n-j}$ for all $j$, so that $f(x)$ is reciprocal. Define an $n$th degree polynomial $g(x) \in \mathbb{Z}[x]$ by 
		$$g(x) := a_n + 2\sum_{j=1}^{n} a_{n-j} T_j\left(\frac{x}{2}\right).$$ 
		Then, $f(x)$ is monogenic if and only if $g(x)$ is monogenic and $f(-1)$, $f(1)$ are individually squarefree.
	\end{theorem}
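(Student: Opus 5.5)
The plan is to factor the index through the intermediate field. Let $\theta$ be a root of $f(x)$ and set $\eta=\theta+\theta^{-1}$, so $g(\eta)=0$ by Proposition~\ref{p3-lemma_1}. Put $L=\mathbb{Q}(\eta)$ and $K=\mathbb{Q}(\theta)$, so $[K:L]=2$. Over $L$, $\theta$ is a root of $h(x)=x^2-\eta x+1\in \mathbb{Z}[\eta][x]$. Since $\theta^{-1}=\eta-\theta$, we get $\mathbb{Z}[\theta]=\mathbb{Z}[\eta]\oplus\mathbb{Z}[\eta]\theta$, and likewise $\mathbb{Z}_L[\theta]=\mathbb{Z}_L\oplus\mathbb{Z}_L\theta$. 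Hence
\[
[\mathbb{Z}_K:\mathbb{Z}[\theta]]=[\mathbb{Z}_K:\mathbb{Z}_L[\theta]]\cdot[\mathbb{Z}_L[\theta]:\mathbb{Z}[\eta][\theta]]=[\mathbb{Z}_K:\mathbb{Z}_L[\theta]]\cdot \ind(g)^2 .
\]
(Alternatively one can compare discriminants using $\Delta(f)=\pm f(1)f(-1)\Delta(g)^2$.) Consequently $f(x)$ is monogenic if and only if $g(x)$ is monogenic and $\mathbb{Z}_L[\theta]=\mathbb{Z}_K$. In particular, the ``only if'' part for $g$ is immediate. It remains to show that, \emph{when $g$ is monogenic}, $\mathbb{Z}_L[\theta]$ is maximal exactly when $f(1)$ and $f(-1)$ are squarefree.

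Next I analyse maximality of $\mathbb{Z}_L[\theta]$ locally, at each prime $\mathfrak P$ of $L$, via Dedekind's criterion for the relative extension. This is legitimate because $\mathbb{Z}_{L,\mathfrak P}$ is a DVR and $h$ is monic of degree $2$. The discriminant of $h$ is $\eta^2-4=(\eta-2)(\eta+2)$, so only primes dividing $\eta\mp2$ matter. If $\mathfrak P\mid \eta-2$, then $h\equiv (x-1)^2 \pmod{\mathfrak P}$, and in fact
\[
h(x)=(x-1)^2+(2-\eta)x .
\]
Dedekind's criterion applied with remainder $(2-\eta)x$ then says that $\mathfrak P$ does not divide the relative index if and only if $v_{\mathfrak P}(\eta-2)=1$. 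The same holds with $x+1$ and $\eta+2$ when $\mathfrak P\mid\eta+2$. This computation does not use that $\mathfrak P\nmid 2$, so primes above $2$, where $\eta-2$ and $\eta+2$ may share $\mathfrak P$, are covered too; treating each factor separately handles them. Thus
\[
\mathbb{Z}_L[\theta]=\mathbb{Z}_K \iff v_{\mathfrak P}(\eta-2)\le1 \text{ and } v_{\mathfrak P}(\eta+2)\le1 \text{ for all } \mathfrak P .
\]

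Finally I translate this into conditions on $f(\pm1)$. From $f(x)=x^ng(x+x^{-1})$ we get $f(1)=g(2)$ and $f(-1)=(-1)^ng(-2)$. Also
\[
g(2)=\prod_i(2-\eta_i)=N_{L/\mathbb{Q}}(2-\eta),
\]
and similarly for $g(-2)$. Now use $\mathbb{Z}_L=\mathbb{Z}[\eta]$. By the Dedekind–Kummer correspondence, the primes of $L$ above $p$ correspond to the irreducible factors of $\bar g$ mod $p$. A prime containing $\eta-2$ must correspond to the factor $x-2$, so at most one prime $\mathfrak P_p=(p,\eta-2)$ above $p$ contains $\eta-2$, and it has residue degree $1$. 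Therefore $v_p(g(2))=v_{\mathfrak P_p}(\eta-2)$. It follows that $g(2)$ is squarefree if and only if $v_{\mathfrak P}(\eta-2)\le1$ for every $\mathfrak P$, and the same holds for $g(-2)$ and $\eta+2$. Combined with the previous step, this proves both directions of Theorem~\ref{p6-thm-1.1}. It also explains the example above: there $p=5$ divides $f(1)$ and $f(-1)$ through two different primes of $L$, so $f(1)f(-1)$ is not squarefree but $f(1)$ and $f(-1)$ individually are.

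The main obstacle I expect is the second step: justifying the tower identity for the index cleanly, and applying Dedekind's criterion to the relative quadratic extension over $\mathbb{Z}_{L,\mathfrak P}$, including primes above $2$ where both $\eta\pm2$ may lie in $\mathfrak P$. If the relative version becomes awkward, my fallback is to apply Theorem~\ref{p5-Thm-2.1} directly over $\mathbb{Z}$. In that version one tracks the factors $(x\mp1)^2$ in $f \bmod p$ coming from the factor $x\mp2$ of $g \bmod p$, and shows that the remainder $M(x)$ is coprime to $x\mp1$ exactly when $p^2\nmid f(\pm1)$.
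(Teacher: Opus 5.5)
Your proposal is correct and is essentially the paper's argument: the same tower identity $\operatorname{ind}(f)=\operatorname{ind}(g)^2[\mathbb{Z}_K:\mathbb{Z}_L[\theta]]$, the same analysis of $x^2-\eta x+1$ at the primes of $L$ containing $\eta\mp 2$ (the paper phrases this through Uchida's criterion, $h\in(\mathfrak{p},x-\varepsilon)^2 \iff \eta-2\varepsilon\in\mathfrak{p}^2$, rather than localized Dedekind), and the same norm argument with the unique degree-one prime $(p,\eta\mp 2)$ when $\mathbb{Z}_L=\mathbb{Z}[\eta]$. The differences are minor. The paper proves necessity of squarefree $f(\pm1)$ separately, via $f(x)=f(\varepsilon)+n\varepsilon f(\varepsilon)(x-\varepsilon)+(x-\varepsilon)^2P_\varepsilon(x)$ and Dedekind's criterion over $\mathbb{Z}$ (essentially your fallback, and valid without assuming $g$ monogenic). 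Also, $\eta\in\mathbb{Z}[\theta]$ should be justified by $\theta$ being a unit ($f(0)=a_{2n}=1$), not by $\theta^{-1}=\eta-\theta$.
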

Let $\Phi_m(x)$ denote the $m$-th cyclotomic polynomial, where $m\geq2$. It is well known that $\Phi_m(x)$ is reciprocal. Let $\zeta_m$ be a primitive $m$-th root of unity. Since the ring of integers of the cyclotomic field $\mathbb{Q}(\zeta_m)$ is $\mathbb{Z}[\zeta_m]$, the cyclotomic polynomial $\Phi_m(x)$ is monogenic. Let $g(x)$ be a polynomial of degree $\phi(m)/2$ such that
    $$\Phi_m(x)=x^{\phi(m)/2}g(x+x^{-1}).$$
    By Theorem~\ref{p6-thm-1.1}, $g(x)$ is monogenic. Moreover, if $\eta=\zeta_m+\zeta_m^{-1}$, then $g(\eta)=0$, and hence the number field generated by a root of $g(x)$ is
    $$K_m^+=\mathbb{Q}(\zeta_m+\zeta_m^{-1}).$$
    Therefore, $K_m^+$ is monogenic and
    $$\mathbb{Z}_{K_m^+}=\mathbb{Z}[\zeta_m+\zeta_m^{-1}].$$
    This provides an alternative proof of a theorem of Liang \cite[Theorem~4]{Liang}.
\begin{corollary}
    Let $m\geq3$ and $K_m=\mathbb{Q}(\zeta_m)$. Then the maximal real subfield $K_m^+=\mathbb{Q}(\zeta_m+\zeta_m^{-1})$ is monogenic.
\end{corollary}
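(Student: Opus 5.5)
The plan is to deduce the corollary from the forward direction of Theorem~\ref{p6-thm-1.1}, applied to $f(x)=\Phi_m(x)$. First I will dispose of the small cases. For $m\ge 3$ the degree $\phi(m)$ is even. If $\phi(m)=2$, that is $m\in\{3,4,6\}$, then $\zeta_m+\zeta_m^{-1}\in\{-1,0,1\}$, so $K_m^+=\mathbb{Q}$, and $\mathbb{Z}=\mathbb{Z}[1]$ is trivially monogenic. So I may assume $\phi(m)=2n$ with $n\ge 2$, which is exactly the range in which Theorem~\ref{p6-thm-1.1} applies.

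Next I will check the hypotheses of Theorem~\ref{p6-thm-1.1} for $f=\Phi_m$. It is monic and irreducible over $\mathbb{Q}$ by the classical irreducibility of cyclotomic polynomials. It is reciprocal: the roots $\zeta_m^k$ with $\gcd(k,m)=1$ are closed under inversion, and their product is $1$ because $m\ge3$ means no root equals its own inverse (the roots pair off). Hence $x^{2n}\Phi_m(1/x)=\Phi_m(x)$, i.e.\ $a_j=a_{2n-j}$. Finally, $\Phi_m$ is monogenic because $\mathbb{Z}_{\mathbb{Q}(\zeta_m)}=\mathbb{Z}[\zeta_m]$. This standard fact I am taking as known, as the paper does.

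With these hypotheses in place, the ``only if'' direction of Theorem~\ref{p6-thm-1.1} shows that the associated $g$ from Proposition~\ref{p3-lemma_1} is monogenic. As a side consequence, $\Phi_m(\pm1)$ are squarefree, which is consistent with $\Phi_m(1),\Phi_m(-1)\in\{1,p\}$.

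It remains to connect $g$ to $K_m^+$. Put $\eta=\zeta_m+\zeta_m^{-1}$. From $0=\Phi_m(\zeta_m)=\zeta_m^{n}g(\eta)$ we get $g(\eta)=0$. The polynomial $g$ has degree $n$, and $[\mathbb{Q}(\eta):\mathbb{Q}]=n$ since $\mathbb{Q}(\zeta_m)$ has degree $2$ over $\mathbb{Q}(\eta)$ (as $\zeta_m$ is a root of $x^2-\eta x+1$ and $\zeta_m\notin\mathbb{R}$). Therefore $g$ is, up to sign of leading coefficient, the minimal polynomial of $\eta$, and in particular it is irreducible. I should also check that $g$ is monic; this holds because its leading term is $2a_0T_n(x/2)$-leading, i.e.\ $a_0=a_{2n}=1$ times $x^n$. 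Monogenicity of $g$ then says exactly $\mathbb{Z}_{K_m^+}=\mathbb{Z}[\eta]$, which proves the corollary.

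The only real content is Theorem~\ref{p6-thm-1.1} itself, which I am allowed to assume. So the main potential obstacle is bookkeeping: making sure the theorem's hypothesis $n\ge2$ is met, which is why the cases $\phi(m)=2$ are handled separately, and making sure that ``$g$ monogenic'' in the theorem refers to the field generated by a root of $g$, which I identify with $K_m^+$ through the degree count above.
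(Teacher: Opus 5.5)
Your proposal is correct and follows the paper's own argument. You apply the ``only if'' direction of Theorem~\ref{p6-thm-1.1} to $f=\Phi_m$ to get $g$ monogenic, then identify $g$ as the minimal polynomial of $\zeta_m+\zeta_m^{-1}$, so that $\mathbb{Z}_{K_m^+}=\mathbb{Z}[\zeta_m+\zeta_m^{-1}]$. Your separate treatment of $m\in\{3,4,6\}$, where $\phi(m)=2$ falls outside the theorem's hypothesis $n\ge 2$, and your check that $g$ is monic of degree $[K_m^+:\mathbb{Q}]$ are bookkeeping that the paper leaves implicit.
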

In \cite{p3-Jones-2022, p3-Jones-2025}, Jones studied the monogeneity of a class of reciprocal polynomials obtained by composing $x^{2^{n-1}}$ with certain reciprocal polynomial. More recently Kaur, Kumar, and Remete~\cite{p3-Kaur-2025}  studied the index of power  compositional polynomials. By combining their results with Theorem~\ref{p6-thm-1.1}, we obtain the following corollary for compositional reciprocal polynomials.
	\begin{corollary} \label{p3-Prop-1.7}
		Let $f(x)$ be a monic irreducible reciprocal polynomial of degree $2n$ with integer coefficients. Let $k \geq 2 $ be an integer such that $f(x^k) $ is irreducible. Then $f(x^k)$ is monogenic  if and only if the following conditions holds:
		\begin{enumerate}
			\item $g(x)$ is monogenic,
			\item $f(-1)$ and $f(1) $ are individually  squarefree, 
			\item $p$ does not divide the index of $f(x^k)$ for all primes $p \mid k$.
		\end{enumerate}
	\end{corollary}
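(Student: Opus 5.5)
The plan is to reduce the corollary to Theorem~\ref{p6-thm-1.1} through the equivalence
$$f(x^k)\ \text{monogenic}\iff f(x)\ \text{monogenic and}\ p\nmid \operatorname{ind}(f(x^k))\ \text{for every prime}\ p\mid k .$$
By Theorem~\ref{p6-thm-1.1}, $f(x)$ is monogenic exactly when conditions (1) and (2) hold. So everything reduces to proving this equivalence.

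\emph{Necessity.} Let $\theta$ be a root of $f(x^k)$, so that $\theta^k$ is a root of $f(x)$. Irreducibility of $f(x^k)$ gives $[\mathbb{Q}(\theta):\mathbb{Q}]=2nk$ and $[\mathbb{Q}(\theta^k):\mathbb{Q}]=2n$. Hence $[\mathbb{Q}(\theta):\mathbb{Q}(\theta^k)]=k$, and $1,\theta,\dots,\theta^{k-1}$ is a basis of $\mathbb{Q}(\theta)$ over $\mathbb{Q}(\theta^k)$. It follows that
$$\mathbb{Z}[\theta]=\bigoplus_{i=0}^{k-1}\theta^i\,\mathbb{Z}[\theta^k],$$
so $\mathbb{Z}[\theta]\cap\mathbb{Q}(\theta^k)=\mathbb{Z}[\theta^k]$. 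Now suppose $f(x^k)$ is monogenic, i.e.\ $\mathbb{Z}_{\mathbb{Q}(\theta)}=\mathbb{Z}[\theta]$. Then
$$\mathbb{Z}_{\mathbb{Q}(\theta^k)}=\mathbb{Z}_{\mathbb{Q}(\theta)}\cap\mathbb{Q}(\theta^k)=\mathbb{Z}[\theta^k],$$
so $f(x)$ is monogenic. Theorem~\ref{p6-thm-1.1} then yields (1) and (2), and (3) holds trivially.

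\emph{Sufficiency.} Assume (1)--(3). By Theorem~\ref{p6-thm-1.1}, $f(x)$ is monogenic. By (3), it only remains to show that no prime $p\nmid k$ divides $\operatorname{ind}(f(x^k))$. For this I would invoke the result of Kaur, Kumar and Remete~\cite{p3-Kaur-2025} on power compositional polynomials: for a prime $p\nmid k$,
$$p\mid\operatorname{ind}(f(x^k))\iff p\mid\operatorname{ind}(f(x)).$$
Combined with $\operatorname{ind}(f)=1$, this finishes the argument.

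If a self-contained argument is preferred for this step, I would use Dedekind's criterion. Write $\overline f=\prod\overline\phi_i^{\,e_i}$ modulo $p$. Since $p\nmid k$, each $\overline\phi_i(x^k)$ is squarefree, because $x^k-c$ is separable for $c\ne 0$. Note also that $\overline\phi_i\neq x$, since $f(0)=a_0=a_{2n}=1$. Therefore
$$\overline{f(x^k)}=\prod\overline\phi_i(x^k)^{e_i}$$
with the $\overline\phi_i(x^k)$ squarefree and pairwise coprime. The Dedekind remainder for $f(x^k)$ is $M(x^k)$, where $M$ is the remainder for $f$. One then checks that $\overline M(x^k)$ is coprime to a repeated factor $\overline\phi_i(x^k)$ if and only if $\overline M$ is coprime to $\overline\phi_i$. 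This holds because the roots of $\overline\phi_i(x^k)$ are exactly the $k$-th roots of the roots of $\overline\phi_i$.

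\emph{Main obstacle.} The only nontrivial step is this transfer of the index condition at primes $p\nmid k$. The point that needs care is the coprimality check for each irreducible factor of $\overline\phi_i(x^k)$, and this is where I would lean on~\cite{p3-Kaur-2025}.
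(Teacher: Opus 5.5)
Your proposal is correct and follows essentially the paper's route. Theorem~\ref{p6-thm-1.1} converts monogeneity of $f(x)$ into conditions (1)--(2), and the Kaur--Kumar--Remete theorem \cite{p3-Kaur-2025} transfers monogeneity between $f(x)$ and $f(x^k)$; your tower argument for necessity and your Dedekind sketch at primes $p\nmid k$ just unpack that theorem. One correction: as you state it, the equivalence $p\mid\operatorname{ind}(f(x^k))\iff p\mid\operatorname{ind}(f)$ for $p\nmid k$ is false in general, since $p^2\mid f(0)$ also forces $p\mid\operatorname{ind}(f(x^k))$ (this is their condition that $f(0)$ be squarefree). It is harmless here because $f(0)=a_{2n}=1$, which is exactly the paper's observation and the fact your Dedekind sketch already uses.
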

	\begin{remark}
		 Theorem~\ref{p6-thm-1.1} and Corollary~\ref{p3-Prop-1.7} highlight a significant reduction in complexity: the problem of establishing the monogeneity of the degree $2n$ polynomial $f(x)$ reduces to the simpler task of verifying the monogeneity of the degree $n$ polynomial $g(x)$.
	\end{remark}
\section{Preliminaries}
    In this section, we recall some basic properties of reciprocal polynomials, an equivalent formulation of Dedekind's criterion, and Uchida's theorem for Dedekind rings. Recall that $f(x)$ and $g(x)$ are related by the identity
    \begin{equation} \label{Eq-2.1}
        f(x) = x^{n} g(x+x^{-1}).
    \end{equation} With this notion one can observe that $f(1) = g(2) $ and $f(-1) = (-1)^ng(-2)$. Also, differentiating  \eqref{Eq-2.1} gives,
    \begin{align*}
        f'(x) & = nx^{n-1}g(x+x^{-1}) +  x^n g'(x+x^{-1})(1 - 1/x^2).  
    \end{align*}
    Thus $$ f'(1) = ng(2) = nf(1)  \text{ and } f'(-1)  = n(-1)^{n-1}g(-2)= -nf(-1).$$
     Now, we recall an equivalent formulation of Dedekind's criterion \cite[Theorem~6.1.4]{p5-cohen}. Various generalizations of this criterion have been studied in the literature.
     In particular, Khanduja and Jhorar \cite{p5-Khanduja-2016} established equivalent formulations of the generalized Dedekind criterion. We use the following formulation in the proof of Lemma~\ref{lemma-1.5}; see also \cite[Lemma~2.1]{p5-MR3486262}.
	 \begin{theorem} \cite[Theorem 1.1]{p5-Khanduja-2016} \label{p5-Thm-2.1}
			Let $f(x) \in \mathbb{Z}[x]$ be a monic irreducible polynomial having the factorization $\overline{g}_1(x)^{e_1} \cdots \overline{g}_t(x)^{e_t}$ modulo a prime $p$  as a product of powers of distinct irreducible polynomials over $\mathbb{Z}/p\mathbb{Z}$ with each $g_i(x) \in \mathbb{Z}[x] $ monic. Let $K = \mathbb{Q}(\theta)$ with $\theta$ a root of $f(x)$. The the following are equivalent:
			\begin{enumerate}[label=\textup{(\roman*)}]
				\item $p$ does not divide $[\mathbb{Z}_K : \mathbb{Z}[\theta]]$;
				\item for each $i$, either $e_i = 1$ or $\overline{g}_i(x)$ does not divide $\overline{M}(x)$ where, 
							\begin{equation*}
								M(x) = \frac{1}{p} \left(f(x) - \ g_1(x)^{e_1} \cdots g_t(x)^{e_t}\right);
							\end{equation*}
				\item $f(x) $ does not belong to the ideal $\langle p, g_i(x) \rangle^2$ in $\mathbb{Z}[x]$ for any $i$, $1 \leq i \leq t.$
			\end{enumerate}
		\end{theorem}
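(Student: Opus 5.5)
The equivalence (i)$\Leftrightarrow$(ii) is exactly Dedekind's criterion in the form of \cite[Theorem~6.1.4]{p5-cohen}. I would cite it rather than reprove it. There $\overline{M}(x)$ is compared with the product of the repeated factors, and ``$\gcd(\overline{M},\overline{g}_i)=1$ for every $i$ with $e_i\ge 2$'' is the same condition as (ii), because each $\overline{g}_i$ is irreducible. So the real work is (ii)$\Leftrightarrow$(iii), which I would prove one index $i$ at a time. Fix $i$ and put $I=\langle p,g_i(x)\rangle$, so that $I^2=\langle p^2,\,p g_i,\,g_i^2\rangle$. Write $f=G+pM$ with $G=g_1^{e_1}\cdots g_t^{e_t}$.

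\emph{Case $e_i=1$.} Here I would show $f\notin I^2$ directly. If $f\in I^2$, then reducing modulo $p$ kills $p^2A$ and $pg_iB$, leaving $\overline{f}\in \overline{g}_i^{\,2}\,\mathbb{F}_p[x]$. The factors $\overline{g}_j$ are distinct irreducibles, so unique factorization in $\mathbb{F}_p[x]$ would force $e_i\ge 2$, a contradiction. Thus when $e_i=1$ both (ii) and (iii) hold automatically for this $i$.

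\emph{Case $e_i\ge2$.} Now $G\in\langle g_i\rangle^2\subseteq I^2$, so $f\in I^2$ if and only if $pM\in I^2$. I would then prove
$$pM\in I^2 \iff \overline{g}_i\mid \overline{M}.$$
For the direction ($\Leftarrow$), write $M=g_ia+pb$ with $a,b\in\mathbb{Z}[x]$. Then $pM=pg_ia+p^2b\in I^2$.

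For the direction ($\Rightarrow$), suppose $pM=p^2A+pg_iB+g_i^2C$. Then $p$ divides $g_i^2C$ in $\mathbb{Z}[x]$. Since $g_i$ is monic, $\overline{g}_i^{\,2}\neq0$, and $\mathbb{F}_p[x]$ is a domain, we get $\overline{C}=0$, i.e.\ $C=pC'$. Dividing by $p$ gives $M=pA+g_i(B+g_iC')$, hence $\overline{g}_i\mid\overline{M}$.

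Combining the two cases: $f\in I^2$ for some $i$ exactly when some $i$ has $e_i\ge2$ and $\overline{g}_i\mid\overline{M}$. Taking negations gives (ii)$\Leftrightarrow$(iii).

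The only delicate points are the two divisibility steps above: cancelling $p$ from $g_i^2C$, and using distinctness of the $\overline{g}_j$ when $e_i=1$. Both rest on $g_i$ being monic and $\mathbb{F}_p[x]$ being a UFD. One should also check that (ii) does not depend on the choice of monic lifts $g_j$. This is automatic once (ii)$\Leftrightarrow$(iii) is established, because (iii) is lift-independent: $I$ depends only on $\overline{g}_i$. The main obstacle, if one insisted on proving (i)$\Leftrightarrow$(ii) from scratch, would be the Dedekind criterion itself. I would handle it by exhibiting the element $\theta'=G_0(\theta)/p$, where $G_0$ is the product of $g_i^{e_i-1}$ over the $i$ with $\overline{g}_i\mid\overline{M}$, as an algebraic integer outside $\mathbb{Z}[\theta]$. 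For the converse, I would show that $\mathbb{Z}[\theta]$ is $p$-maximal via the order $\{\alpha: p\alpha\in\mathbb{Z}[\theta]\}$. Since this is standard, I would simply cite \cite{p5-cohen}.
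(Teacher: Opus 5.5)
\paragraph{Verdict.} Your argument is correct, but it follows a different route from the paper. The paper gives no proof of this statement. It imports the whole three-way equivalence from \cite{p5-Khanduja-2016} (see also \cite[Lemma~2.1]{p5-MR3486262}), where it comes out of a study of when $R[\theta]$ is integrally closed over more general base rings, in the spirit of Uchida's criterion.

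\paragraph{Comparison.} You cite only the classical Dedekind criterion for (i)$\Leftrightarrow$(ii). Your translation of Cohen's condition into (ii) is right: choosing the lift $\prod g_i^{e_i-1}$ of $\overline{f}/\prod\overline{g}_i$, his auxiliary polynomial becomes $\pm M$, and the gcd condition becomes (ii). You then prove (ii)$\Leftrightarrow$(iii) by hand, and both cases are sound. The cancellation $p\mid g_i^2C\Rightarrow p\mid C$ is exactly where monicity of $g_i$ enters.

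What your route buys is transparency:
\begin{itemize}
\item (ii)$\Leftrightarrow$(iii) is seen to be a formal statement about ideals of $\mathbb{Z}[x]$. It needs neither irreducibility of $f$ nor any arithmetic of $K$.
\item Condition (ii) is shown to be independent of the chosen lifts.
\item The form (iii) used in Lemma~\ref{lemma-1.5} is reduced directly to the textbook criterion.
\end{itemize}
What the citation buys is generality beyond $\mathbb{Z}$. Your cancellation argument would in fact go through verbatim over any discrete valuation ring, with $p$ replaced by a uniformizer.

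\paragraph{A correction to your aside.} This concerns your optional sketch of Dedekind's criterion from scratch. The witness $G_0(\theta)/p$ with $G_0=\prod_{\overline{g}_i\mid\overline{M}}g_i^{e_i-1}$ is in general not integral, because it drops the factors $g_j^{e_j}$ of the indices $j$ that are not bad. At a prime of $\mathbb{Z}_K$ lying over the maximal ideal $(p,g_j(\theta))$ of $\mathbb{Z}[\theta]$, the numerator has valuation $0$ while $p$ does not.

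For example, take $f=x^3+x^2+8$ and $p=2$. Then $\overline{f}=x^2(x+1)$, $M=4$ and $G_0=x$. But $\theta/2$ has minimal polynomial $y^3+\tfrac12 y^2+1$, so it is not integral. The standard witness is $U(\theta)/p$ with $\overline{U}=\overline{f}/\overline{g}_i$ for a single bad index $i$. Here that is $\theta(\theta+1)/2=-4/\theta$, which is integral.

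Similarly, $\{\alpha : p\alpha\in\mathbb{Z}[\theta]\}=\tfrac1p\mathbb{Z}[\theta]$ is not an order. What one actually uses for the converse is either an element $\alpha\in\mathbb{Z}_K\setminus\mathbb{Z}[\theta]$ with $p\alpha\in\mathbb{Z}[\theta]$, or the multiplier ring of the $p$-radical as in \cite{p5-cohen}. Since you cite \cite{p5-cohen} rather than rely on this sketch, none of this affects your proof.
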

        The following theorem, due to Uchida, plays a key role in establishing the monogeneity results of this article.
        \begin{theorem} \cite[Theorem]{uchida}\label{uchida}
            Let $R$ be a Dedekind ring. Let $\theta$ be an element of some integral domain which contains $R$, and let $\theta$ be integral over $R$. Then $R[\alpha]$ is a Dedekind ring if and only if defining polynomial $f(x)$ of $\alpha$ is not contained in $\mathfrak{m}^2$ for any maximal ideal $\mathfrak{m}$ of the polynomial ring $R[x]$.
        \end{theorem}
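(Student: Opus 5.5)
The plan is to prove Theorem~\ref{uchida} locally: identify $R[\alpha]$ with $R[x]/(f)$ and use the characterization of Dedekind domains as noetherian domains of dimension at most one whose localizations at maximal ideals are discrete valuation rings (equivalently, regular local rings of dimension one).

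First I will record the global setup. Here $f$ is the minimal (monic) polynomial of $\alpha$ over the fraction field of $R$. It has coefficients in $R$ because $R$ is integrally closed, and $R[\alpha]\cong R[x]/(f)$ by Gauss's lemma. Consequently $R[\alpha]$ is a noetherian domain. It is integral over $R$, so $\dim R[\alpha]=\dim R\le 1$. If $R$ is a field, then $R[x]$ is a PID, $f$ is irreducible, and $R[\alpha]$ is a field. In that case both sides of the equivalence hold trivially, since $f\notin\mathfrak m^2$ when $\mathfrak m=(f)$ and $f\notin\mathfrak m$ otherwise. So I will assume $\dim R=1$. Then $R[\alpha]$ is Dedekind if and only if, for every maximal ideal $\mathfrak n$ of $R[\alpha]$, the local ring $R[\alpha]_{\mathfrak n}$ is regular of dimension one.

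Next I will describe the local rings of $R[x]$. Maximal ideals $\mathfrak n$ of $R[\alpha]$ correspond to maximal ideals $\mathfrak m\supseteq (f)$ of $R[x]$, with $R[\alpha]_{\mathfrak n}\cong R[x]_{\mathfrak m}/(f)$. Such an $\mathfrak m$ contracts to a maximal ideal $\mathfrak p=\mathfrak m\cap R$; this is the point that uses integrality, since $\mathfrak n$ lies over a maximal ideal of $R$. Then $\mathfrak m=(\mathfrak p, h(x))$ with $\bar h$ irreducible in $(R/\mathfrak p)[x]$. Localizing, $\mathfrak p R_{\mathfrak p}=(\pi)$ is principal, so $\mathfrak m R[x]_{\mathfrak m}=(\pi,h)$. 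Since $\dim R[x]_{\mathfrak m}=2$ (as $R$ is noetherian), $R[x]_{\mathfrak m}$ is a regular local ring of dimension $2$.

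Now I will apply the standard fact that if $(A,\mathfrak M)$ is regular local and $0\ne f\in\mathfrak M$, then $A/(f)$ is regular if and only if $f\notin\mathfrak M^2$. Here $A/(f)$ has dimension $\dim A-1=1$ because $A$ is a domain. The fact follows by comparing embedding dimensions: $\dim_k \mathfrak M/(\mathfrak M^2+(f))$ equals $\dim A-1$ exactly when $f\notin \mathfrak M^2$. Hence $R[\alpha]_{\mathfrak n}$ is a DVR if and only if $f\notin \mathfrak m^2R[x]_{\mathfrak m}$. Because $\mathfrak m$ is maximal, $\mathfrak m^2$ is $\mathfrak m$-primary, and so $\mathfrak m^2R[x]_{\mathfrak m}\cap R[x]=\mathfrak m^2$. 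The local condition is therefore exactly $f\notin\mathfrak m^2$. For maximal ideals $\mathfrak m$ not containing $f$ the condition $f\notin\mathfrak m^2$ holds automatically. Together these give the equivalence for all maximal ideals of $R[x]$.

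The main point needing care is the regularity step: I must verify that $\dim R[x]_{\mathfrak m}=2$ and that $\mathfrak m R[x]_{\mathfrak m}$ is generated by two elements, and then apply the quotient criterion cleanly. The remaining steps are bookkeeping with localization and the correspondence of maximal ideals.
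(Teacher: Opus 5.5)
The paper gives no proof of this theorem. It quotes the result verbatim from Uchida, together with his description of maximal ideals (Lemma~\ref{lemma-uchida}), so there is no internal argument to compare yours with.

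Your proposal is a correct and complete proof. The structure is sound:
\begin{itemize}
\item You identify $R[\alpha]\cong R[x]/(f)$; monicity of $f$ and the division algorithm already give this, without Gauss's lemma.
\item You dispose of the field case and reduce to the local statement that every $R[\alpha]_{\mathfrak n}$ is a DVR.
\item You then apply the hypersurface criterion: $R[x]_{\mathfrak m}/(f)$ is regular if and only if $f\notin\mathfrak m^2R[x]_{\mathfrak m}$.
\item Finally, you contract back to $f\notin\mathfrak m^2$ because $\mathfrak m^2$ is $\mathfrak m$-primary.
\end{itemize}

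The point you flagged is settled as follows.
\begin{itemize}
\item The chain $0\subsetneq\mathfrak pR[x]\subsetneq\mathfrak m$ shows $\operatorname{ht}\mathfrak m\ge 2$. Here $\mathfrak pR[x]$ is prime because $(R/\mathfrak p)[x]$ is a domain, and it is not maximal because $(R/\mathfrak p)[x]$ is not a field.
\item Krull's height theorem applied to the two generators $\pi,h$ of $\mathfrak mR[x]_{\mathfrak m}$ gives $\operatorname{ht}\mathfrak m\le 2$.
\end{itemize}
Hence $R[x]_{\mathfrak m}$ is two-dimensional with a two-generated maximal ideal, so it is regular.

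Two of your steps match what the paper records:
\begin{itemize}
\item Your step showing that a maximal $\mathfrak m\supseteq(f)$ contracts to a maximal $\mathfrak p$ (by integrality) and has the form $(\mathfrak p,h)$ is exactly Lemma~\ref{lemma-uchida} in the case needed.
\item Your remark that maximal ideals not containing $f$ impose no condition explains why that lemma only treats ideals containing a monic polynomial. For example, over a DVR the maximal ideal $(\pi x-1)$ contracts to $0$.
\end{itemize}

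What your route buys is a conceptual reading of the criterion: $f\notin\mathfrak m^2$ says precisely that the hypersurface $R[x]/(f)$ is regular at $\mathfrak m$. The cost is invoking regular-local-ring theory. The same computation can be done by hand if you prefer:
\begin{itemize}
\item A one-dimensional noetherian local domain is a DVR if and only if its maximal ideal is principal, that is, if and only if $\dim_k\mathfrak n/\mathfrak n^2=1$.
\item One has $\mathfrak n/\mathfrak n^2\cong\mathfrak m/(\mathfrak m^2+(f))$.
\item $\mathfrak m/\mathfrak m^2$ is two-dimensional with basis the images of $\pi$ and $h$.
\end{itemize}
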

        Uchida also described the structure of maximal ideal in the Dedekind ring.
    \begin{lemma} \cite[Lemma]{uchida} \label{lemma-uchida}
        Let $\mathfrak{m}$ be a maximal ideal of $R[x]$. If $\mathfrak{m}$ contains an integral polynomial, then $\mathfrak{m}$ is of the form $\mathfrak{m} = (\mathfrak{p},f(x))$, where $\mathfrak{p}$ is a maximal ideal of $R$ and $f(x)$ is an integral polynomial which is irreducible modulo $\mathfrak{p}$.
    \end{lemma}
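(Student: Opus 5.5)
The plan is to study $\mathfrak{m}$ through its contraction to $R$ and then pass to the quotient $(R/\mathfrak{p})[x]$, which is a principal ideal domain. Let $\mathfrak{m}$ be a maximal ideal of $R[x]$ containing an integral (that is, monic) polynomial $h(x)$. Put $\mathfrak{p} := \mathfrak{m}\cap R$. This is a prime ideal of $R$, being the contraction of a prime ideal under the inclusion $R\hookrightarrow R[x]$.

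\textbf{Step 1: $\mathfrak{p}$ is maximal in $R$.} Consider the field $L := R[x]/\mathfrak{m}$. It contains the domain $R/\mathfrak{p}$, because the map $R/\mathfrak{p}\to L$ is injective by the definition of $\mathfrak{p}$. Moreover $L$ is generated over $R/\mathfrak{p}$ by the class $\bar x$. Since $h(x)\in\mathfrak{m}$ is monic, $\bar x$ satisfies the monic polynomial $\bar h$ over $R/\mathfrak{p}$. Hence $L$ is integral over $R/\mathfrak{p}$. A field that is integral over a subring forces that subring to be a field: if $0\ne a\in R/\mathfrak{p}$, then $a^{-1}\in L$ is integral over $R/\mathfrak{p}$, and clearing the integral equation expresses $a^{-1}$ as a polynomial in $a$. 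Therefore $R/\mathfrak{p}$ is a field, and $\mathfrak{p}$ is a maximal ideal of $R$.

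This is the step where the monicity hypothesis is genuinely used, and I expect it to be the only real obstacle. Without it, one could have maximal ideals such as $(1-ax)$ in a local Dedekind ring whose contraction is $(0)$.

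\textbf{Step 2: identify the image of $\mathfrak{m}$.} Reduce modulo $\mathfrak{p}R[x]$. We have $\mathfrak{p}R[x]\subseteq\mathfrak{m}$ and $R[x]/\mathfrak{p}R[x]\cong k[x]$ with $k = R/\mathfrak{p}$ a field, by Step 1. So $\mathfrak{m}$ corresponds to a maximal ideal $\overline{\mathfrak{m}}$ of the principal ideal domain $k[x]$. This ideal is nonzero, since it contains $\bar h$, which is monic and hence nonzero. Therefore $\overline{\mathfrak{m}} = (\bar f)$ for a monic irreducible $\bar f\in k[x]$.

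\textbf{Step 3: lift back.} Choose a monic lift $f(x)\in R[x]$ of $\bar f$, keeping the same degree. Then $f$ is an integral polynomial that is irreducible modulo $\mathfrak{p}$. By the correspondence theorem, $\mathfrak{m}$ is the preimage of $(\bar f)$, namely $\mathfrak{m} = \mathfrak{p}R[x] + f(x)R[x] = (\mathfrak{p}, f(x))$. This gives the required form.
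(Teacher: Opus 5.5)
The paper gives no proof of this lemma. It is quoted from Uchida and invoked only implicitly, when Uchida's criterion is applied to $x^2-\alpha x+1$ over $\mathbb{Z}_L$, so there is no in-paper argument to compare against.

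Your proof is correct and complete, and it is the standard argument. The contraction $\mathfrak{p}=\mathfrak{m}\cap R$ is maximal because the field $R[x]/\mathfrak{m}=(R/\mathfrak{p})[\bar x]$ is integral over $R/\mathfrak{p}$; this is the only place monicity is needed. Then $\mathfrak{m}$ is the full preimage of a maximal ideal $(\bar f)$ of the principal ideal domain $(R/\mathfrak{p})[x]$, which is $\mathfrak{p}R[x]+f(x)R[x]$ for any monic lift $f$.

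You never use that $R$ is Dedekind, so your argument in fact proves the statement for an arbitrary commutative ring $R$.

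One small correction to your side remark. In the example $(1-ax)$ you need $a$ to be a nonzero element of the maximal ideal of the local ring; then $R[x]/(1-ax)\cong R[1/a]$ is the fraction field. For a unit $a$ the quotient is $R$ itself, and the ideal is not maximal.
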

    Theorem~\ref{uchida} and Lemma~\ref{lemma-uchida} are useful in proving Lemma~\ref{lemma-1.5}.
	 \section{Proof of Theorem~\ref{p6-thm-1.1}}
In this section, we prove Theorem~\ref{p6-thm-1.1} using three crucial lemmas. The first establishes that $p^2\mid f(1)$ or $p^2\mid f(-1)$ implies $p\mid\operatorname{ind}(f)$. The second gives an algebraic relation between $\operatorname{ind}(f)$ and $\operatorname{ind}(g)$, thereby strengthening Proposition~1.8 of \cite{BNW}. The final lemma, assuming that $g(x)$ is monogenic, relates the prime divisors of $\operatorname{ind}(f)$ to those of $f(1)$ and $f(-1)$. The main tools used in the proofs of these lemmas are equivalent formulations of Dedekind's criterion and a theorem of Uchida.
    
    \begin{lemma} \label{lemma-1.5}
        Let $f(x) \in \mathbb{Z}[x]$ be a monic irreducible  reciprocal polynomial of degree $2n$ and let $\varepsilon \in \{1,-1\}$. For a prime $p$, if $p^2 \mid f(\varepsilon)$ then $p \mid \operatorname{ind}(f)$.
        \end{lemma}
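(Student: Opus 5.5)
We apply criterion (iii) of Theorem~\ref{p5-Thm-2.1} with the linear factor $x-\varepsilon$. Since $\varepsilon^2=1$, we have $\varepsilon\in\mathbb{Z}$, and $\overline{x-\varepsilon}$ is irreducible modulo $p$. The goal is to show that
$$f(x)\in\langle p,\,x-\varepsilon\rangle^2=\langle p^2,\ p(x-\varepsilon),\ (x-\varepsilon)^2\rangle .$$
Once this holds, $p\mid\ind(f)$ follows from Theorem~\ref{p5-Thm-2.1}, because $x-\varepsilon$ (up to the choice of monic lift) is one of the irreducible factors of $f$ modulo $p$. Indeed $p\mid f(\varepsilon)$, so $\overline{x-\varepsilon}$ divides $\overline f$.

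Expand $f$ by Taylor's formula around $\varepsilon$:
$$f(x)=f(\varepsilon)+f'(\varepsilon)(x-\varepsilon)+(x-\varepsilon)^2h(x).$$
Here $h(x)\in\mathbb{Z}[x]$, since $f$ has integer coefficients and division by $(x-\varepsilon)^2$ with $\varepsilon\in\mathbb{Z}$ stays integral. The third term lies in the ideal. The first term lies in it because $p^2\mid f(\varepsilon)$. For the middle term, the Preliminaries give $f'(1)=nf(1)$ and $f'(-1)=-nf(-1)$, that is, $f'(\varepsilon)=\varepsilon n f(\varepsilon)$. Hence $p^2\mid f'(\varepsilon)$, and in particular $p\mid f'(\varepsilon)$, so $f'(\varepsilon)(x-\varepsilon)\in p(x-\varepsilon)\mathbb{Z}[x]$. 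All three terms are in the ideal, which gives the membership.

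The only delicate point is how criterion (iii) is phrased: it refers to the lifts $g_i$ chosen in the factorization. The condition $f\in\langle p,g_i\rangle^2$ depends only on $\overline{g_i}$, since $\langle p,g_i\rangle$ depends only on $g_i$ modulo $p$. So we may take $g_i=x-\varepsilon$.

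Alternatively, one can phrase the argument through criterion (ii). Write $f\equiv (x-\varepsilon)^e u(x)\pmod p$ with $e\ge 2$; the multiplicity is at least $2$ because $\overline{f}(\varepsilon)=\overline{f'}(\varepsilon)=0$. Then check that $\overline M(\varepsilon)=0$, using $f(\varepsilon)\equiv 0\pmod{p^2}$. The ideal-membership route avoids this bookkeeping, so I would use it. The main step to get right is the derivative identity $f'(\varepsilon)=\varepsilon n f(\varepsilon)$, which is already recorded in the Preliminaries, so no real obstacle is expected.
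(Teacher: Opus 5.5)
Your proposal is correct and follows the paper's proof: Taylor-expand $f$ at $\varepsilon$, use $f'(\varepsilon)=\varepsilon n f(\varepsilon)$ to get $f\in(p,x-\varepsilon)^2$ when $p^2\mid f(\varepsilon)$, and conclude $p\mid\ind(f)$ by criterion (iii) of Theorem~\ref{p5-Thm-2.1}. You also spell out two points the paper leaves implicit: that $\overline{x-\varepsilon}$ actually divides $\overline{f}$, and that the ideal $(p,g_i)$ depends only on $\overline{g_i}$, so the choice of lift does not matter.
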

    \begin{proof}
        As $f(x)$ is a reciprocal polynomial it satisfies $f(x) = x^ng(x + x^{-1})$. Differentiating this identity gives, 
        \begin{equation} \label{p6-eq-1.1}
            f'(1)=nf(1),\qquad f'(-1)=-nf(-1).
        \end{equation}
        The Taylor expansion in $\mathbb{Z}[x]$ of $f(x)$ around $\varepsilon$ is $$f(x)=f(\varepsilon)+f'(\varepsilon)(x-\varepsilon)
             +(x-\varepsilon)^2P_{\varepsilon}(x),
            \qquad P_{\varepsilon}\in\mathbb{Z}[x].$$
            Using \eqref{p6-eq-1.1} yields, 
            $$f(x)=f(\varepsilon)+n \varepsilon f(\varepsilon)(x-\varepsilon)
             +(x-\varepsilon)^2P_{\varepsilon}(x).$$
            Thus by Theorem~\ref{p5-Thm-2.1},
            \begin{equation}\label{Eq-1.1}
            p^2\mid f(\varepsilon)
            \ \Longrightarrow\ f\in(p,x-\varepsilon)^2
            \ \Longrightarrow\ p\mid\operatorname{ind}(f).
            \end{equation}
    \end{proof}
    \begin{lemma} \label{lemma-1.6}
        Let $f(x)$ and $g(x)$ are as defined in Theorem \ref{p6-thm-1.1}. Let $\theta$ be a root of $f(x)$, and put
        $$K=\mathbb{Q}(\theta),\qquad \alpha=\theta+\theta^{-1},\qquad L=\mathbb{Q}(\alpha).$$ Then $$\operatorname{ind}(f) = \operatorname{ind}(g)^2 [\mathbb{Z}_K : \mathbb{Z}_L[\theta]].$$
    \end{lemma}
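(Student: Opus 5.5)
We will factor the index through the tower of orders
$$\mathbb{Z}[\theta]\subseteq \mathbb{Z}_L[\theta]\subseteq \mathbb{Z}_K$$
and use multiplicativity of indices of free $\mathbb{Z}$-modules of rank $2n$:
$$[\mathbb{Z}_K:\mathbb{Z}[\theta]]=[\mathbb{Z}_K:\mathbb{Z}_L[\theta]]\cdot[\mathbb{Z}_L[\theta]:\mathbb{Z}[\theta]].$$
It then suffices to show $[\mathbb{Z}_L[\theta]:\mathbb{Z}[\theta]]=\operatorname{ind}(g)^2$.

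First we set up the relative structure. Since $\theta^2-\alpha\theta+1=0$, $\theta$ is integral over $\mathbb{Z}[\alpha]$ with minimal polynomial $x^2-\alpha x+1$ over $L$. The degree count $[K:\mathbb{Q}]=2n$, $[L:\mathbb{Q}]=\deg g=n$ (as $g(\alpha)=0$ and $g$ is irreducible, being forced by irreducibility of $f$) gives $[K:L]=2$. Hence $\mathbb{Z}[\theta]=\mathbb{Z}[\alpha][\theta]=\mathbb{Z}[\alpha]\oplus\mathbb{Z}[\alpha]\theta$. Indeed $\theta^{-1}=\alpha-\theta\in\mathbb{Z}[\theta]$, so $\alpha\in\mathbb{Z}[\theta]$, and conversely powers of $\theta$ reduce via the quadratic. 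Likewise $\mathbb{Z}_L[\theta]=\mathbb{Z}_L\oplus\mathbb{Z}_L\theta$, since $1,\theta$ are $L$-linearly independent.

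Next we compute the middle index. As abelian groups,
$$\mathbb{Z}_L[\theta]/\mathbb{Z}[\theta]\cong (\mathbb{Z}_L/\mathbb{Z}[\alpha])\oplus(\mathbb{Z}_L/\mathbb{Z}[\alpha])\theta,$$
so the index is $[\mathbb{Z}_L:\mathbb{Z}[\alpha]]^2=\operatorname{ind}(g)^2$. Combining this with the tower gives the lemma.

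The main point needing care is the claim that $\mathbb{Z}[\theta]$ has exactly the free $\mathbb{Z}[\alpha]$-basis $\{1,\theta\}$, so that the direct sum decompositions are compatible. This follows from $L$-linear independence of $1,\theta$ together with closure under multiplication by $\theta$. Also $\mathbb{Z}_L[\theta]\subseteq\mathbb{Z}_K$ is clear, since $\theta$ is integral. Uchida's theorem is not needed here; it enters later when analyzing $[\mathbb{Z}_K:\mathbb{Z}_L[\theta]]$.
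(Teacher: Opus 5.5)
Your proposal is correct and follows the paper's own route: the tower $\mathbb{Z}[\theta]\subseteq\mathbb{Z}_L[\theta]\subseteq\mathbb{Z}_K$, the decompositions $\mathbb{Z}[\theta]=\mathbb{Z}[\alpha]\oplus\theta\mathbb{Z}[\alpha]$ and $\mathbb{Z}_L[\theta]=\mathbb{Z}_L\oplus\theta\mathbb{Z}_L$, and the resulting middle index $\operatorname{ind}(g)^2$. One small fix is needed: writing $\theta^{-1}=\alpha-\theta\in\mathbb{Z}[\theta]$ already assumes $\alpha\in\mathbb{Z}[\theta]$, so justify $\theta^{-1}\in\mathbb{Z}[\theta]$ as the paper does, from $f(0)=a_{2n}=1$, which gives $\theta^{-1}=-(\theta^{2n-1}+a_{2n-1}\theta^{2n-2}+\cdots+a_1)$.
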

    \begin{proof}
        
        Since $g(\alpha)=0$ and $\theta$ satisfies $X^2-\alpha X+1$, the degree bounds and $[K:\mathbb{Q}]=2n$ imply that $[L:\mathbb{Q}]=n$ and $[K:L]=2$. In particular, $g(x)$ is the minimal polynomial of $\alpha$ and $\theta \not\in L$.
        Note that  $f(x)$ has constant term $1$ and $f(\theta)  = \theta^{2n} +a_{2n-1} \theta^{{2n-1}}+ a_1 \theta +\cdots+ 1 = 0$.
        Dividing by $\theta^{-1}$ gives,
        $$\theta^{-1} = -(\theta^{2n-1} +a_{2n-1} \theta^{{2n-2}}+ \cdots+  a_1) .$$
        Consequently, $$\alpha \in \mathbb{Z}[\theta], \quad\quad  \mathbb{Z}[\alpha][\theta] = \mathbb{Z}[\theta].$$
        As, $\theta^2 -\alpha \theta +1 = 0$ we get $\theta^2 =  \alpha \theta - 1$.
        Since $\alpha \in \mathbb{Z}[\alpha]$ every higher power of $\theta$ can be written as reduced  $\mathbb{Z}[\alpha]$ linear combination of 1 and $\theta$. 
        Explicitly if $\theta^m = a + b\theta$ where $a,b \in \mathbb{Z}[\alpha]$ then $\theta^{m+1} = a\theta + b\theta^2 = -b + (a+\alpha b)\theta$.
        Induction therefore shows that $$\mathbb{Z}[\alpha][\theta] = \mathbb{Z}[\alpha] + \theta\mathbb{Z}[\alpha].$$
        To show that this sum is direct, suppose $ a + b\theta = 0$ where $a,b \in \mathbb{Z}[\alpha]$. If $b \not= 0$ then $\theta = -a/b \in  L$, as $a,b \in \mathbb{Z}[\alpha]$, this implies that $\theta \in \mathbb{Z}[\alpha]$. This gives a contradiction as $\theta \not \in L$. Hence $b = 0 $, and then $a = 0$. Thus the sum is direct. Similarly we can show that $\mathbb{Z}_L[\theta] = \mathbb{Z}_L \oplus \theta\mathbb{Z}_L.$ Thus we have proved \begin{equation} \label{eq-1.2}
            \mathbb{Z}[\alpha][\theta] = \mathbb{Z}[\alpha] \oplus \theta\mathbb{Z}[\alpha] \quad \text{and}\quad \mathbb{Z}_L[\theta] = \mathbb{Z}_L \oplus \theta\mathbb{Z}_L.
        \end{equation}

        Next consider the intermediate index $[\mathbb{Z}_L[\theta] : \mathbb{Z}[\theta]]$. The decompositions in \eqref{eq-1.2} yield an isomorphism of additive groups
        \begin{equation} \label{eq-1.3}
            \frac{\mathbb{Z}_L[\theta]}{\mathbb{Z}[\theta]} \cong \frac{\mathbb{Z}_L \oplus \theta \mathbb{Z}_L}{\mathbb{Z}[\alpha] \oplus \theta \mathbb{Z}[\alpha]} \cong \frac{\mathbb{Z}_L}{\mathbb{Z}[\alpha]} \oplus \frac{\mathbb{Z}_L}{\mathbb{Z}[\alpha]}.
        \end{equation}
        Since $g(x)$ is the minimal polynomial of $\alpha$, we have $\operatorname{ind}(g) = [\mathbb{Z}_L:\mathbb{Z}[\alpha]].$
        Taking cardinalities in \eqref{eq-1.3}, we obtain
        \begin{equation} \label{eq-1.4}
            \operatorname{ind}(g)^2 = [\mathbb{Z}_L[\theta]:\mathbb{Z}[\theta]].
        \end{equation}
        Finally, the inclusions
        \[
        \mathbb{Z}[\theta]\subseteq \mathbb{Z}_L[\theta]\subseteq \mathbb{Z}_K
        \]
        are inclusions of additive groups of finite index. By the multiplicativity of
        indices and \eqref{eq-1.4}, we obtain
        $$
        \begin{aligned}
        \operatorname{ind}(f)
        &=[\mathbb{Z}_K:\mathbb{Z}[\theta]]\\
        &=[\mathbb{Z}_K:\mathbb{Z}_L[\theta]]
          [\mathbb{Z}_L[\theta]:\mathbb{Z}[\theta]]\\
        &=\operatorname{ind}(g)^2
          [\mathbb{Z}_K:\mathbb{Z}_L[\theta]].
        \end{aligned}
        $$
        Therefore,
       $$
        \operatorname{ind}(f)
        =
        \operatorname{ind}(g)^2
        [\mathbb{Z}_K:\mathbb{Z}_L[\theta]].
        $$
    \end{proof}
    \begin{remark}
        Lemma~\ref{lemma-1.6} implies that $\operatorname{ind}(g)^2 \mid \operatorname{ind}(f) .$ This strengthens  Proposition~1.8 of \cite{BNW}.
    \end{remark}
    Next corollary is an immediate cosequence of Lemma~\ref{lemma-1.6}. 
    \begin{corollary}
        Let $f(x)$ and $g(x)$ are as defined in Theorem \ref{p6-thm-1.1}. If $f(x)$ is monogenic then $g(x)$ is monogenic.
    \end{corollary}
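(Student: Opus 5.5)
The corollary follows directly from Lemma~\ref{lemma-1.6}. Assume $f(x)$ is monogenic, so $\operatorname{ind}(f)=1$. With $\theta$, $\alpha=\theta+\theta^{-1}$, $K$ and $L$ as in Lemma~\ref{lemma-1.6}, that lemma gives
$$1=\operatorname{ind}(f)=\operatorname{ind}(g)^2\,[\mathbb{Z}_K:\mathbb{Z}_L[\theta]].$$

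Both factors on the right are positive integers. The first, $\operatorname{ind}(g)=[\mathbb{Z}_L:\mathbb{Z}[\alpha]]$, is the index of a full-rank sublattice. The second is a positive integer because $\mathbb{Z}_L[\theta]$ is a subgroup of $\mathbb{Z}_K$: $\theta$ is integral, so $\mathbb{Z}_L[\theta]\subseteq\mathbb{Z}_K$, and both have rank $2n$ by the decomposition in \eqref{eq-1.2}. A product of positive integers equal to $1$ forces each factor to be $1$, so $\operatorname{ind}(g)=1$.

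It remains to check that $\operatorname{ind}(g)$ really is the index of $g$ in the sense of the paper's definition. This requires $g(x)$ to be monic and irreducible with root $\alpha$. Irreducibility was established in the proof of Lemma~\ref{lemma-1.6}: $[L:\mathbb{Q}]=n=\deg g$ and $g(\alpha)=0$, so $g(x)$ is the minimal polynomial of $\alpha$. Monicity follows from the leading coefficient of $2T_n(x/2)$ being $1$. Hence $\operatorname{ind}(g)=[\mathbb{Z}_L:\mathbb{Z}[\alpha]]=1$, and $g(x)$ is monogenic.

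No step here is a real obstacle. The only care needed is confirming that all indices involved are finite positive integers, and this was effectively handled in Lemma~\ref{lemma-1.6}.
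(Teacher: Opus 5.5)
Your proof is correct and is the paper's argument: the paper calls the corollary an immediate consequence of Lemma~\ref{lemma-1.6}, and you read off $\operatorname{ind}(g)=1$ from $1=\operatorname{ind}(g)^2[\mathbb{Z}_K:\mathbb{Z}_L[\theta]]$ as a product of positive integers. Your extra checks, that $[\mathbb{Z}_K:\mathbb{Z}_L[\theta]]$ is a finite positive integer and that $g$ is monic and irreducible so $\operatorname{ind}(g)$ is an index in the paper's sense, are details the paper leaves implicit.
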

    \begin{lemma} \cite[Equation 3.3]{BNW} \label{p6-lemma1.3}
        Let $f(x)$ and $g(x)$ are as defined in Theorem \ref{p6-thm-1.1}.  Then $$\ind(f)^2 \mid f(-1)f(1) \ind(g)^4.$$ 
    \end{lemma}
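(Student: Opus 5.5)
The plan is to derive the divisibility from the discriminant relation $\Delta(f)=(-1)^{n(2n-1)}f(1)f(-1)\Delta(g)^2$ recalled in the introduction, combined with the fundamental relation \eqref{p5-Eq-1.1} applied to $f$ and to $g$, and with the tower $\mathbb{Q}\subset L\subset K$ from Lemma~\ref{lemma-1.6}.

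First, apply \eqref{p5-Eq-1.1} to $f$ and to $g$. Since $g$ is the minimal polynomial of $\alpha$ over $\mathbb{Q}$ (shown in the proof of Lemma~\ref{lemma-1.6}), this gives $\Delta(f)=\ind(f)^2\Delta(K)$ and $\Delta(g)=\ind(g)^2\Delta(L)$. Substituting both into the discriminant relation yields
$$\ind(f)^2\Delta(K)=\pm f(1)f(-1)\ind(g)^4\Delta(L)^2 .$$

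Second, relate $\Delta(K)$ to $\Delta(L)^2$. Since $[K:L]=2$, the tower formula for discriminants gives
$$\Delta(K)=\Delta(L)^{[K:L]}N_{L/\mathbb{Q}}(\mathfrak d_{K/L})=\Delta(L)^2N_{L/\mathbb{Q}}(\mathfrak d_{K/L}),$$
and $N_{L/\mathbb{Q}}(\mathfrak d_{K/L})$ is a nonzero integer. Hence $\Delta(L)^2\mid\Delta(K)$. Cancelling $\Delta(L)^2\neq0$ gives
$$\ind(f)^2\,N_{L/\mathbb{Q}}(\mathfrak d_{K/L})=\pm f(1)f(-1)\ind(g)^4,$$
from which $\ind(f)^2\mid f(1)f(-1)\ind(g)^4$ follows at once.

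The main obstacle is justifying the discriminant identity for $f$ and $g$, if one does not simply quote it from \cite{BNW}. A direct route uses the fact that the roots of $f$ come in pairs $\theta_i,\theta_i^{-1}$, with $\alpha_i=\theta_i+\theta_i^{-1}$. Then:
\begin{itemize}
\item for $i\neq j$, the product of the four differences between $\{\theta_i,\theta_i^{-1}\}$ and $\{\theta_j,\theta_j^{-1}\}$ equals $(\alpha_i-\alpha_j)^2$;
\item the within-pair difference satisfies $(\theta_i-\theta_i^{-1})^2=\alpha_i^2-4$;
\item $\prod_i(\alpha_i^2-4)=g(2)g(-2)$ up to sign, which equals $\pm f(1)f(-1)$ by the identities $f(1)=g(2)$ and $f(-1)=(-1)^ng(-2)$ from the preliminaries.
\end{itemize}
Alternatively, one could avoid the tower formula by working with relative discriminants of $\mathbb{Z}_L[\theta]$ over $\mathbb{Z}_L$, which is $\alpha^2-4$. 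The index formula of Lemma~\ref{lemma-1.6} then gives $[\mathbb{Z}_K:\mathbb{Z}_L[\theta]]^2\mid N_{L/\mathbb{Q}}(\alpha^2-4)=\pm f(1)f(-1)$. Combined with $\ind(f)=\ind(g)^2[\mathbb{Z}_K:\mathbb{Z}_L[\theta]]$, this again gives the claim.
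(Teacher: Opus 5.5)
Your argument is correct and matches the paper's approach. The paper gives no proof of its own: it cites \cite{BNW}, and the key ingredient it recalls from there is exactly the relation $\Delta(f)=(-1)^{n(2n-1)}f(1)f(-1)\Delta(g)^2$, which together with $\Delta(f)=\ind(f)^2\Delta(K)$, $\Delta(g)=\ind(g)^2\Delta(L)$ and $\Delta(L)^2\mid\Delta(K)$ gives the divisibility just as you derive it. Your root-pairing check of the discriminant identity and your alternative relative-discriminant route (which yields $[\mathbb{Z}_K:\mathbb{Z}_L[\theta]]^2\mid f(1)f(-1)$ and then uses Lemma~\ref{lemma-1.6}) are also sound; the only nit is that the tower formula $\Delta(K)=\Delta(L)^2N_{L/\mathbb{Q}}(\mathfrak d_{K/L})$ holds only up to sign, which your later $\pm$ already absorbs.
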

    \begin{lemma} \label{lemma-2.5}
        Let $f(x)$ and $g(x)$ are as defined in Theorem \ref{p6-thm-1.1}. Suppose $g(x)$ is monogenic, then $$p \mid \operatorname{ind}(f) \iff  p^2 \mid f(1) \text{ or } p^2 \mid f(-1).$$
    \end{lemma}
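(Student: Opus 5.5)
The direction ``$\Leftarrow$'' is exactly Lemma~\ref{lemma-1.5}: if $p^2\mid f(1)$ or $p^2\mid f(-1)$, then $p\mid\operatorname{ind}(f)$, with no hypothesis on $g$. So the work is in ``$\Rightarrow$''. Assume $g(x)$ is monogenic and $p\mid\operatorname{ind}(f)$. We must show that $p^2$ divides $f(1)$ or $f(-1)$.

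By Lemma~\ref{lemma-1.6} with $\operatorname{ind}(g)=1$, we have $\operatorname{ind}(f)=[\mathbb{Z}_K:\mathbb{Z}_L[\theta]]$, and here $\mathbb{Z}_L=\mathbb{Z}[\alpha]$. Lemma~\ref{p6-lemma1.3} gives $\operatorname{ind}(f)^2\mid f(1)f(-1)$. Hence $p\mid f(1)f(-1)$, so $p$ divides $f(\varepsilon)$ for some $\varepsilon\in\{1,-1\}$.

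If $p$ divides only one of them, then $p^2\mid f(1)f(-1)$ forces $p^2\mid f(\varepsilon)$, and we are done. The genuine case is therefore $p\mid f(1)$ and $p\mid f(-1)$. Then $p\mid f(1)-f(-1)$ and $p\mid f(1)+f(-1)$. If $p$ were odd, the factorization of $f$ modulo $p$ would contain both $x-1$ and $x+1$. For $p=2$ the two points coincide.

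To handle this case, I would localize the index computation using Uchida's Theorem~\ref{uchida} over the Dedekind ring $R=\mathbb{Z}_L=\mathbb{Z}[\alpha]$. Here $\theta$ is integral over $R$ with defining polynomial $X^2-\alpha X+1$. Since $p\mid[\mathbb{Z}_K:R[\theta]]$, the ring $R[\theta]$ is not Dedekind at some prime above $p$. So by Lemma~\ref{lemma-uchida} there are a maximal ideal $\mathfrak p\mid p$ of $R$ and a polynomial $h$ irreducible mod $\mathfrak p$ with
$$X^2-\alpha X+1\in(\mathfrak p,h)^2.$$
This forces $X^2-\alpha X+1$ to be a square $(X-c)^2$ modulo $\mathfrak p$. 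Comparing coefficients gives $c^2\equiv1$ and $\alpha\equiv2c$, so $\alpha\equiv\pm2\pmod{\mathfrak p}$.

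Writing $X^2-\alpha X+1=(X-c)^2+(2c-\alpha)X+(1-c^2)$, the membership in $(\mathfrak p,X-c)^2$ means $\alpha-2c\in\mathfrak p^2$ after a suitable choice of lift $c$. Taking $c=\varepsilon$, this says $\alpha-2\varepsilon\in\mathfrak p^2$ for some $\varepsilon$. For odd $p$ this choice is automatic; for $p=2$ it needs care.

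The main step is to turn $\alpha-2\varepsilon\in\mathfrak p^2$ into $p^2\mid g(2\varepsilon)$, and recall $f(\varepsilon)=\pm g(2\varepsilon)$. Because $R=\mathbb{Z}[\alpha]$ is monogenic, Dedekind's criterion (Theorem~\ref{p5-Thm-2.1}) applied to $g$ shows that $x-2\varepsilon$ is a factor of $g$ mod $p$ whose prime $\mathfrak p=(p,\alpha-2\varepsilon)$ has residue degree $1$. Then $v_{\mathfrak p}(\alpha-2\varepsilon)\ge2$ should give
$$v_p\bigl(g(2\varepsilon)\bigr)=v_p\bigl(N_{L/\mathbb{Q}}(2\varepsilon-\alpha)\bigr)\ge2 .$$
Indeed, the norm picks up $\mathfrak p^{v_{\mathfrak p}}$ with $N\mathfrak p=p$, hence $p^2\mid f(\varepsilon)$.

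I expect the main obstacle to be this valuation-to-norm passage, together with the $p=2$ case, where $1\equiv-1$ and the lift $c$ must be chosen carefully. I would first try to make the argument work with the explicit choices $c=\pm1$.
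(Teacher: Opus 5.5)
Your proof is correct and follows the paper's own argument: Uchida's criterion over $\mathbb{Z}_L=\mathbb{Z}[\alpha]$ applied to $X^2-\alpha X+1$, reduction to $\alpha-2\varepsilon\in\mathfrak p^2$ with $\mathfrak p=(p,\alpha-2\varepsilon)$ of residue degree one, and then the norm $N_{L/\mathbb{Q}}(\alpha-2\varepsilon)=\pm g(2\varepsilon)=\pm f(\varepsilon)$. The difficulties you flag are not real ones. The preliminary case split via Lemma~\ref{p6-lemma1.3} is unnecessary, since the Uchida argument covers every $p$. The norm step only needs $v_p(N_{L/\mathbb{Q}}(\alpha-2\varepsilon))\ge v_{\mathfrak p}(\alpha-2\varepsilon)$. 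For $p=2$, either choice $c=\pm1$ works, because $(\mathfrak p,X-1)=(\mathfrak p,X+1)$ and $4\in\mathfrak p^2$.
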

    \begin{proof}
        As $g(x)$ is monogenic we have $\mathbb{Z}_L = \mathbb{Z}[\alpha]$. We apply Uchdia's criterion (see Theorem~\ref{uchida}) over the Dedekind domain $\z_L$ to $h(x) = x^2 -\alpha x +1$. Suppose  $p \mid \ind(f)$. Let $\mathfrak{p}$ be  a prime ideal of $\z_L$ above the rational prime $p$. 
        A repeated factor of $h(x)$ modulo $\mathfrak{p}$ occurs only when
        $$\alpha\equiv2\varepsilon\pmod{\mathfrak{p}},\qquad \varepsilon\in\{1,-1\}.$$
        Then $h(x) \equiv (x-\varepsilon)^2 \pmod{\mathfrak{p}}.$ Expanding $h(x)$ in terms of $x -\varepsilon$ we get,
        \begin{equation}
            h(x)=(x-\varepsilon)^2+(2\varepsilon-\alpha)(x-\varepsilon)+\varepsilon(2\varepsilon-\alpha).
        \end{equation}
        Since $2\varepsilon-\alpha\in\mathfrak{p}$, comparison of the constant and linear coefficients in powers of $x-\varepsilon$ gives
        \begin{equation}\label{eq-1.6}
            h(x)\in(\mathfrak{p},x-\varepsilon)^2
            \quad\Longleftrightarrow\quad
            \alpha-2\varepsilon\in\mathfrak{p}^2.
        \end{equation}
        
        As $p \mid \ind(f)$ and $g(x)$ is monogenic by Lemma~\ref{p6-lemma1.3} we get $p \mid g(2 \varepsilon)$.
        Now we show that there is exactly one prime ideal of residue degree one, above $p$ containing $\alpha- 2\varepsilon$, namely $\mathfrak{p}_{\varepsilon}= (p, \alpha- 2\varepsilon)$.
        Set $A = \z_L =\z[\alpha]$, $a = 2\varepsilon$ and $J =  (p, \alpha- a)$. Since $g(x)$ is the minimal polynomial of $\alpha$ we get, $$ A = \z[x]/(g(x) ) .$$
        Then \begin{align*}
        A/J & = \z[x]/(g(x), p , x -a ) \cong \mathbb{F}_p/(\overline{g}(a)),
        \end{align*}
        as $p \mid g(a)$, $\overline{g}(a) = 0$ in $\mathbb{F}_p$, therefore $A/J \cong \mathbb{F}_p$. This also shows that the residue degree of $J$ is 1.
        
        As $g(x)$ is the  minimal polynomial of $\alpha$ we get $N_{L/\q}(\alpha) =  g(0)$. This implies that \begin{equation} \label{Eq-1.7}
            N_{L/\q}(\alpha -2\varepsilon) =  (-1)^n g(2\varepsilon).
        \end{equation} 
        By \eqref{Eq-1.7} we get,
        \begin{equation}\label{eq-1.8}
            v_{\mathfrak{p}_\varepsilon}(\alpha-2\varepsilon)
            = (\text{residue degree of }\mathfrak{p}) \cdot  v_p(g(2\varepsilon))= 1 \cdot v_p(g(2\varepsilon))=v_p(f(\varepsilon)).
        \end{equation}
        Thus combining \eqref{eq-1.6} and \eqref{Eq-1.7} yields,
        \begin{equation} \label{eq-1.9}
            \alpha-2\varepsilon\in\mathfrak{p}^2 \iff v_{\mathfrak{p}_\varepsilon}(\alpha-2\varepsilon) \geq 2 \iff v_p(f(\varepsilon)) \geq 2.
        \end{equation}
        When a reduction of $h(x)$ has no repeated factor, it creates no obstruction in Uchida's criterion. Therefore \eqref{eq-1.6} and \eqref{eq-1.9} yields, for monogenic $g(x)$
        \begin{equation}
            p \mid \ind(f) \implies p^2 \mid f(-1) \quad \text{or} \quad p^2 \mid f(1).
        \end{equation}

        Converse easily follows  from Lemma~\ref{lemma-1.5}. For proving converse part we do not require $g(x)$ to be monogenic.
\end{proof}
\begin{proof}[\textbf{Proof of Theorem~\ref{p6-thm-1.1}}]
    Suppose that $g(x)$ is monogenic and that $f(1)$ and $f(-1)$ are individually squarefree. By Lemma~\ref{lemma-2.5}, $\operatorname{ind}(f)=1$, and hence $f(x)$ is monogenic. Conversely, if $f(x)$ is monogenic, then Lemma~\ref{lemma-1.6} implies that $g(x)$ is monogenic. Therefore, both $f(x)$ and $g(x)$ are monogenic, and Lemma~\ref{lemma-2.5} yields that $f(1)$ and $f(-1)$ are individually squarefree.
\end{proof}
    
    To prove Corollary~\ref{p3-Prop-1.7}, we recall a theorem of Kaur, Kumar, and Remete concerning the monogeneity of power compositional polynomials.
\begin{theorem} \cite[Theorem 1.1]{p3-Kaur-2025} \label{p3-prop-4.11}
	Let $f(x)$ be a monic polynomial with integer coefficients. Let $k \geq 2 $ be an integer such that $f(x^k) $ is irreducible. Then $f(x^k)$ is monogenic if and only if the following conditions hold.
	\begin{enumerate}
		\item $f(x)$ is monogenic,
		\item $f(0)$ is squarefree,
		\item $p$ does not divide the index of $f(x^k)$ for all primes $p \mid k$.
	\end{enumerate}
\end{theorem}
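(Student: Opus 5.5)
The plan is to mirror the proof of Theorem~\ref{p6-thm-1.1}, with the tower $\mathbb{Q}(\theta^k)\subseteq\mathbb{Q}(\theta)$ playing the role of $L\subseteq K$. Put $F(x)=f(x^k)$ and let $\theta$ be a root of $F$. Set $\beta=\theta^k$, $K=\mathbb{Q}(\theta)$ and $L=\mathbb{Q}(\beta)$. Since $F$ is irreducible of degree $k\deg f$ and $f(\beta)=0$, we get $[L:\mathbb{Q}]=\deg f$, $[K:L]=k$, and $f$ is irreducible with minimal polynomial of $\beta$ equal to $f$.

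\textbf{Step 1: an index formula.} Exactly as in Lemma~\ref{lemma-1.6}, I will show
\[
\mathbb{Z}[\theta]=\bigoplus_{i=0}^{k-1}\theta^i\mathbb{Z}[\beta]
\quad\text{and}\quad
\mathbb{Z}_L[\theta]=\bigoplus_{i=0}^{k-1}\theta^i\mathbb{Z}_L .
\]
Spanning follows from the relation $\theta^k=\beta$. Directness follows because $x^k-\beta$ is the minimal polynomial of $\theta$ over $L$. Taking quotients then gives
\[
\operatorname{ind}(F)=\operatorname{ind}(f)^k\,[\mathbb{Z}_K:\mathbb{Z}_L[\theta]].
\]
Hence monogeneity of $F$ forces monogeneity of $f$, and conversely $F$ is monogenic if and only if $f$ is monogenic and $\mathbb{Z}_L[\theta]=\mathbb{Z}_K$.

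\textbf{Step 2: necessity of $f(0)$ squarefree.} This is the analogue of Lemma~\ref{lemma-1.5}. If $p^2\mid f(0)$, then
\[
F(x)=f(0)+x^k\,Q(x), \qquad Q\in\mathbb{Z}[x],\ k\ge 2,
\]
so $F\in(p,x)^2$. By Theorem~\ref{p5-Thm-2.1}, $p\mid\operatorname{ind}(F)$. Together with Step~1, this proves that the three conditions are necessary; condition (3) is trivially necessary.

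\textbf{Step 3: sufficiency, the main step.} Assume (1)--(3). It remains to show that no prime $p\nmid k$ divides $[\mathbb{Z}_K:\mathbb{Z}_L[\theta]]$. I will apply Theorem~\ref{uchida} over $\mathbb{Z}_L=\mathbb{Z}[\beta]$ to $h(x)=x^k-\beta$, using Lemma~\ref{lemma-uchida} to describe the maximal ideals $(\mathfrak{p},\cdot)$ with $\mathfrak{p}\mid p$.

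If $\beta\notin\mathfrak{p}$, then $h$ is separable modulo $\mathfrak{p}$, because $h'=kx^{k-1}$ and $k$ is a unit modulo $\mathfrak{p}$. So such $\mathfrak{p}$ give no obstruction. If $\beta\in\mathfrak{p}$, then $h\equiv x^k$, and expanding shows $h\in(\mathfrak{p},x)^2$ if and only if $\beta\in\mathfrak{p}^2$.

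The obstacle is to rule out $\beta\in\mathfrak{p}^2$, using only $p^2\nmid f(0)$. I plan to argue as in Lemma~\ref{lemma-2.5}. Since $\mathbb{Z}_L=\mathbb{Z}[x]/(f)$, the primes of $\mathbb{Z}_L$ containing $\beta$ and lying above $p$ correspond to the factor $x$ of $\bar f$. So there is at most one such prime, namely $\mathfrak{p}_0=(p,\beta)$, and it has residue degree $1$. The other primes above $p$ do not contain $\beta$. Then $N_{L/\mathbb{Q}}(\beta)=\pm f(0)$ gives $v_{\mathfrak{p}_0}(\beta)=v_p(f(0))\le 1$.

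Hence $h\notin\mathfrak{m}^2$ for every maximal ideal $\mathfrak{m}$ above $p$. Localizing Uchida's criterion at $p$, the ring $\mathbb{Z}_L[\theta]$ is integrally closed at $p$, so $p\nmid[\mathbb{Z}_K:\mathbb{Z}_L[\theta]]$. Combining this with Step~1 and condition (3) yields $\operatorname{ind}(F)=1$.
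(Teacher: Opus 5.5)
The paper gives no proof of this statement. It is quoted from Kaur, Kumar and Remete \cite{p3-Kaur-2025} and used only as a black box in the proof of Corollary~\ref{p3-Prop-1.7}. Your argument is a correct, self-contained proof, obtained by transplanting the paper's template for Theorem~\ref{p6-thm-1.1} to the tower $\mathbb{Q}(\theta^k)\subseteq\mathbb{Q}(\theta)$:
the decomposition $\mathbb{Z}_L[\theta]=\bigoplus_{i<k}\theta^i\mathbb{Z}_L$ yields $\operatorname{ind}(f(x^k))=\operatorname{ind}(f)^k[\mathbb{Z}_K:\mathbb{Z}_L[\theta]]$, the analogue of Lemma~\ref{lemma-1.6};
your constant-term argument replaces the Taylor expansion at $\pm1$ in Lemma~\ref{lemma-1.5};
and Uchida's criterion for $x^k-\beta$ over $\mathbb{Z}_L=\mathbb{Z}[\beta]$ replaces the one for $x^2-\alpha x+1$ in Lemma~\ref{lemma-2.5}, with $\mathfrak{p}_0=(p,\beta)$ in the role of $\mathfrak{p}_\varepsilon$ and $N_{L/\mathbb{Q}}(\beta)=\pm f(0)$ in the role of \eqref{Eq-1.7}.

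What this buys over the bare citation is transparency. For monogenic $f$ and $p\nmid k$ you get exactly $p\mid\operatorname{ind}(f(x^k))\iff p^2\mid f(0)$. It also becomes clear that hypothesis (3) is needed only because, when $p\mid k$, $x^k-\beta$ can be inseparable modulo $\mathfrak{p}$ even though $\beta\notin\mathfrak{p}$.

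The one step you should write out rather than wave at is the localization of Uchida's criterion. Apply Theorem~\ref{uchida} to the semilocal Dedekind ring $R=S^{-1}\mathbb{Z}_L$ with $S=\mathbb{Z}\setminus p\mathbb{Z}$. Then $R[\theta]$ is Dedekind iff it equals its integral closure $S^{-1}\mathbb{Z}_K$, which holds iff $p\nmid[\mathbb{Z}_K:\mathbb{Z}_L[\theta]]$. Since $h$ is monic, Lemma~\ref{lemma-uchida} shows the maximal ideals of $R[x]$ containing $h$ are $(S^{-1}\mathfrak{p},\phi)$ with $\mathfrak{p}\mid p$, and for these your membership computations go through unchanged.
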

	\begin{proof}[\textbf{Proof of Corollary \ref{p3-Prop-1.7}}]
		Since $f(x)$ is monic and reciprocal, $f(0)=1$. Furthermore, condition~(1) of Theorem~\ref{p3-prop-4.11} can be replaced by the assumptions that $g(x)$ is monogenic and $f(1)$ and $f(-1)$ are individually squarefree. Hence, the result follows from Theorems~\ref{p6-thm-1.1} and~\ref{p3-prop-4.11}.
	\end{proof}
	 

\begin{thebibliography}{10}

\bibitem{p3-Alexandersson}
Per Alexandersson, Luis~Angel Gonz\'alez-Serrano, Egor~A. Maximenko, and Mario~Alberto Moctezuma-Salazar, \emph{Symmetric polynomials in the symplectic alphabet and the change of variables {$z_j = x_j + x^{-1}_j$}}, Electron. J. Combin. \textbf{28} (2021), no.~1, Paper No. 1.56, 36. \MR{4245289}

\bibitem{BNW}
Rupam Barman, Anuj Narode, and Vinay Wagh, \emph{On monogeneity of reciprocal polynomials}, Ramanujan J. \textbf{69} (2026), no.~3, Paper No. 56, 17. \MR{5031649}

\bibitem{p5-cohen}
Henri Cohen, \emph{A course in computational algebraic number theory}, Graduate Texts in Mathematics, vol. 138, Springer-Verlag, Berlin, 1993. \MR{1228206}

\bibitem{p5-Dedekind}
Richard Dedekind, \emph{Über den zusammenhang zwischen der theorie der ideale und der theorie der höheren kongruenzen}, Abhandlungen der Königlichen Gesellschaft der Wissenschaften zu Göttingen \textbf{23} (1878), 1--23.

\bibitem{p5-Ershov}
Yu.\~L. Ershov, \emph{The {D}edekind criterion for arbitrary valuation rings}, Dokl. Akad. Nauk \textbf{410} (2006), no.~2, 158--160. \MR{2455373}

\bibitem{p5-gaal-2}
Istv\'an Ga\'al, \emph{Monogenity and power integral bases: recent developments}, Axioms \textbf{13} (2024), no.~7, 429--429.

\bibitem{p5-MR3486262}
Anuj Jakhar, Sudesh~K. Khanduja, and Neeraj Sangwan, \emph{On prime divisors of the index of an algebraic integer}, J. Number Theory \textbf{166} (2016), 47--61. \MR{3486262}

\bibitem{p2-Jones_2019}
Lenny Jones, \emph{Monogenic polynomials with non-squarefree discriminant}, Proc. Amer. Math. Soc. \textbf{148} (2020), no.~4, 1527--1533. \MR{4069191}

\bibitem{p3-jones_2021-1}
\bysame, \emph{Infinite families of reciprocal monogenic polynomials and their {G}alois groups}, New York J. Math. \textbf{27} (2021), 1465--1493. \MR{4334375}

\bibitem{p3-jones_2021-2}
\bysame, \emph{Sextic reciprocal monogenic dihedral polynomials}, Ramanujan J. \textbf{56} (2021), no.~3, 1099--1110. \MR{4341112}

\bibitem{p3-Jones-2022}
\bysame, \emph{Reciprocal monogenic quintinomials of degree {$2^n$}}, Bull. Aust. Math. Soc. \textbf{106} (2022), no.~3, 437--447. \MR{4510135}

\bibitem{p3-Jones-2025}
\bysame, \emph{Reciprocal monogenic septinomials of degree {$2^n3$}}, Ann. Math. Sil. \textbf{39} (2025), no.~1, 155--169. \MR{4895441}

\bibitem{p3-Kaur-2025}
Sumandeep Kaur, Surender Kumar, and L\'aszl\'o Remete, \emph{On the index of power compositional polynomials}, Finite Fields Appl. \textbf{107} (2025), Paper No. 102642, 20. \MR{4902688}

\bibitem{p2-Kedlaya}
Kiran~S. Kedlaya, \emph{A construction of polynomials with squarefree discriminants}, Proc. Amer. Math. Soc. \textbf{140} (2012), no.~9, 3025--3033. \MR{2917075}

\bibitem{p5-Khanduja-2016}
Sudesh~K. Khanduja and Bablesh Jhorar, \emph{When is {$R[\theta]$} integrally closed?}, J. Algebra Appl. \textbf{15} (2016), no.~5, 1650091, 7. \MR{3479450}

\bibitem{Liang}
Joseph~J. Liang, \emph{On the integral basis of the maximal real subfield of a cyclotomic field}, J. Reine Angew. Math. \textbf{286/287} (1976), 223--226. \MR{419402}

\bibitem{uchida}
K\^oji Uchida, \emph{When is {$Z[\alpha ]$} the ring of the integers?}, Osaka Math. J. \textbf{14} (1977), no.~1, 155--157. \MR{450255}

\end{thebibliography}

\end{document}